\newcommand{\R}{{\mathbb R}}
\newcommand{\C}{{\mathbb C}}
\title{Vandermonde factorizations of a regular Hankel matrix and their application on the computation
 of B\'ezier curves}
\author{Licio H. Bezerra\thanks{Departamento de Matem\'atica,
        Universidade Federal de Santa Catarina,
        Florian\'opolis, SC,
        Brazil 88040-900 ({\tt licio@mtm.ufsc.br}).}}
\begin{document}

\maketitle

\begin{abstract}
 In this paper, a new method to compute a B\'ezier curve of degree $n=2m-1$ is introduced, here formulated as a Bernstein-Hankel form in $\C^m$, that is, each coordinate of the curve is of the form $e_m^T B^e_m(s) H B^e_m(s)^T e_m$, where $B^e_m(s)$ is a $m\times m$ lower triangular Bernstein matrix and $H$ is a Hankel matrix. The method depends on Vandermonde factorizations of a regular Hankel matrix, and so we begin with a proof, which utilizes Pascal matrices techniques, that given a regular Hankel matrix $H$, there is a finite set of complex numbers $\gamma$ such that $x^m - p_{m-1}x^{m-1} - ... - p_0$ has multiple roots, where $(p_0\, ...\, p_{m-1}) = (h_{m+1}\, ... \, h_n \, \gamma)\, H^{-1}$. Therefore, a Vandermonde factorization of $H$ can be accomplished by taking a complex number at random, and the Bernstein-Hankel form can be easily calculated, thus yielding points on the B\'ezier curve. We also see that even when $H$ is nearly singular, the method still works by shifting the skew-diagonal of $H$. By comparing this new method with a Pascal matrix method and Casteljau's, we see that the results suggest that this new method is very effective with regard to accuracy and time of computation for various values of $n$.
\end{abstract}

\begin{keywords}
Pascal matrix, Bernstein matrix, B\'ezier curve, Hankel form, Vandermonde factorization
\end{keywords}

\begin{AMS}
12E10, 15A23, 15B05, 65D17
\end{AMS}

\pagestyle{myheadings}
\thispagestyle{plain}
\markboth{L. H. Bezerra}{On the computation of B\'ezier curves}

\section{Introduction}

Let $H$ be a Hankel matrix of order $n$, i.e., $(\forall i,j \in \{ 1,...,n\} )$ $H_{ij}=h_{i+j-1}$. A very known theorem says that, if $H$ is nonsingular, then
a Vandermonde matrix $V$ and a diagonal matrix $D$ exist such that
$H=VDV^T$. There is a proof of this fact in \cite{heinig_rost}, which
utilizes a class of matrices arisen in the theory of root separation of algebraic polynomials,
namely the class of Bezoutians.
Here, in section \$ 2, from a procedure that is currently utilized in linear prediction to estimate parameters in exponential modeling, it is showed that the spectrum of the companion matrix $C=C(x_{\gamma})$, where $x_{\gamma}$ is the solution of the linear prediction system $Hx=y_{\gamma}$, with $y_{\gamma}=(h_{n+1}\, ...\, h_{2n-1}\, \gamma)^T$, is simple for all but a finite set of $\gamma$. For the values belonging to this finite set, there is
a more general factorization: $H = V_c D V_c^T$, where $V_c$ is a confluent Vandermonde matrix and $D$ is a block diagonal matrix, as it can be seen in \cite{boley2}.
Our approach to the proof of the Vandermonde factorization of a nonsingular Hankel matrix is very similar to the one found in \cite{fiedler}, but the proofs are distinct. For instance, we make here use of generalized Pascal matrices to quickly obtain some general properties of Hankel matrices.

In section \$3, we see that a B\'ezier curve of degree $n-1$, where $n=2m-1$, can be described as a Bernstein-Hankel form on $\C^m$. Also, in this section a new algorithm to compute B\'ezier curves is proposed, from a Vandermonde factorization of the associated Hankel matrix. In section \$ 4, results of numerical experiments are presented, which strongly suggest that we can compute those curves in a very fast and precise way. That is corroborated from the comparisons done with the Casteljau's method (\cite{casteljau}) with various values of $n$. On the other hand, however, several experiments indicate that the computation of Vandermonde factorization of a Hankel matrix is sensitive to its condition with respect to inversion. However, once its skew-diagonal entries are shifted toward skew-diagonal dominance the precision of the computation improves, which is a simple and efficient way to deal with the instability of Vandermonde factorization of ill-conditioned Hankel matrices, at least for the computation of B\'ezier curves from this approach.

\section{Vandermonde factorizations of a nonsingular Hankel matrix}

\

Let $H = \left(\begin{array}{cccc} h_1 & h_2 & ... & h_n\\
\vdots & \vdots & \vdots & \vdots \\
h_{n-1} & h_n & ... & h_{2n-2} \\
h_n & h_{n+1} & ... & h_{2n-1} \end{array} \right)$. Suppose $H$ is nonsingular.
Let $x_{\gamma}$ be the solution of the linear prediction system $Hx=y_{\gamma}$, where $y_{\gamma}=(h_{n+1}\, ...\, h_{2n-1}\, \gamma)^T$.
We want to show that the set of $\gamma \in \C$ for which the companion matrix
$C_{\gamma}=compan(x_{\gamma})$ is not diagonalizable is finite.
Since $C_{\gamma}$ is a nonderogatory matrix, it suffices to show that $S$, the set of scalars $\gamma$ such that the spectrum of $C_{\gamma}$ is not simple, is finite.
This means that, out of this set, the characteristic polynomial of $C_{\gamma}$, $p_{\gamma}(x)$, doesn't have multiple roots.
If $a=(a_0 \, ... \, a_{n-1})^T$ and $b=(b_0 \, ... \, b_{n-1})^T$ are the respective solutions of
$Hx = e_n= (0\, ...\, 0\, 1)^T$ and $Hx = (h_{n+1}\, ...\, h_{2n-1} \, 0)^T$, then
$p_{\gamma}(x) = r(x) -\gamma s(x)$, where
$r(x) =
x^n - b_{n-1}x^{n-1} - ... - b_1x - b_0$ and $s(x)=a_{n-1}x^{n-1} + ...+a_1x + a_0$.
It is not difficult to see that $S$ is finite iff $r(x)$ and $s(x)$ don't have any common root.

\

\begin{lemma}
\label{nonzero}
Let $H$ be a $n\times n$ nonsingular Hankel matrix. If
$a=(a_0  ...  a_{n-1})^T$ and $b=(b_0 ...  b_{n-1})^T$ are the respective solutions of
$Hx = e_n$ and $Hx = (h_{n+1}\, ...\, h_{2n-1} \, 0)^T$,
then $a_0\ne 0$ or $b_0\ne 0$.
\end{lemma}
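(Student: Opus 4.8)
The goal is to show that $a_0$ and $b_0$ cannot both vanish. I would argue by contradiction: suppose $a_0 = b_0 = 0$, and try to produce a contradiction with the nonsingularity of $H$.

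First I would observe that $a_0$ and $b_0$ are, up to sign, the first components of $H^{-1}e_n$ and $H^{-1}(h_{n+1}\ \ldots\ h_{2n-1}\ 0)^T$ respectively, so they are determined by the first row of $H^{-1}$. Write $f = (f_1\ \ldots\ f_n)$ for the first row of $H^{-1}$; then $a_0 = f_n$ (the last entry of $f$, since it pairs against $e_n$) and $b_0 = \sum_{k=1}^{n-1} f_k h_{n+k}$. The condition $a_0 = 0$ says $f_n = 0$. The condition $b_0 = 0$ then says $\sum_{k=1}^{n-1} f_k h_{n+k} = 0$, i.e.\ $f$ is orthogonal to the vector $(h_{n+1}\ \ldots\ h_{2n-1}\ 0)^T$ — but since $f_n = 0$ already, this is the same as $f$ being orthogonal to $(h_{n+1}\ \ldots\ h_{2n-1}\ h_{2n})^T$ for \emph{any} value we like to put in the last slot. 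The plan is to combine this with the defining relations $f H = e_1^T$ (i.e.\ $\sum_{j} f_j h_{i+j-1} = \delta_{i1}$ for $i=1,\ldots,n$), which say $f$ is orthogonal to the columns $(h_i\ \ldots\ h_{i+n-1})^T$ of $H$ for $i = 2,\ldots,n$ and has inner product $1$ with the first column $(h_1\ \ldots\ h_n)^T$.

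The key move is a shift argument exploiting the Hankel structure. Since $f_n = 0$, the vector $f$ effectively lives in the first $n-1$ coordinates, so padding it and shifting the window of $h$'s by one behaves well: the orthogonality of $(f_1\ \ldots\ f_{n-1})$ to the "shifted" Hankel data lets me conclude that $f$ is also orthogonal to the first column $(h_1\ \ldots\ h_n)^T$, contradicting $\langle f, (h_1\ \ldots\ h_n)^T\rangle = 1$. Concretely: from $b_0 = 0$ and $f_n=0$ we get $\sum_{k=1}^{n-1} f_k h_{k+n} = 0$; together with the equations $\sum_{k=1}^{n-1} f_k h_{k+i-1} = 0$ for $i = 3,\ldots,n$ (the $i=2,\ldots,n$ relations, dropping the vanishing $f_n$ term), this says the truncated vector $\tilde f = (f_1\ \ldots\ f_{n-1})$ is orthogonal to every length-$(n-1)$ Hankel column built from $h_3, h_4, \ldots, h_{2n-1}$ — that is, $\tilde f$ is in the left kernel of the $(n-1)\times(n-1)$ Hankel matrix $\tilde H$ with entries $h_{i+j+1}$. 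But $\tilde H$ is a trailing principal submatrix of $H$ shifted by the Hankel structure, and the remaining relation $\sum_{k=1}^{n-1} f_k h_{k+1} = 0$ (the $i=2$ equation) together with $\sum_{k=1}^{n-1} f_k h_k = 1$ (the $i=1$ equation) will be what breaks: I expect to show $\tilde H$ nonsingular forces $\tilde f = 0$, whence $f = 0$, contradicting $fH = e_1^T$.

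The main obstacle is handling the bookkeeping of which shifted submatrix is invertible. It is \emph{not} automatic that the $(n-1)\times(n-1)$ Hankel block $\tilde H = (h_{i+j+1})$ is nonsingular just because $H$ is. The cleanest route is probably to use the Pascal/generalized-Pascal machinery the paper has just set up: conjugating $H$ by a suitable (generalized) Pascal matrix shifts the Hankel sequence and relates minors of $H$ to minors of its shifts, and the nonsingularity of $H$ should then be leveraged to pin down the relevant submatrix — or, alternatively, to show directly that $a_0 = b_0 = 0$ would make a certain bordered determinant vanish that cannot. I would look for the slickest such identity rather than casework; failing that, the fallback is to treat separately the case where $\tilde H$ is singular and push the contradiction one more step down, using that $H$ itself is nonsingular to prevent the degeneracy from propagating. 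I expect the Pascal-matrix observation from the start of Section 2 to be exactly the tool that makes this clean.
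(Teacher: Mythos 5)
Your setup is sound and genuinely different from the paper's proof, but the plan as written stalls at exactly the step you flag, and the tools you reach for to close it (Pascal machinery, or a cascade of singular-case fallbacks) are not what is needed. The gap: by discarding the $i=2$ relation $\sum_{k=1}^{n-1} f_k h_{k+1}=0$ you are forced to ask whether the square Hankel block $\tilde H=(h_{i+j+1})$ of order $n-1$ is nonsingular --- and it need not be (already for $n=2$, $H=\bigl(\begin{smallmatrix}0&1\\1&0\end{smallmatrix}\bigr)$ is nonsingular while $\tilde H=(h_3)=(0)$), so the plan cannot be completed in that form. The fix is simply to keep every relation you already have. Under your contradiction hypothesis $f_n=0$ and $b_0=0$, the truncated vector $\tilde f=(f_1\ \ldots\ f_{n-1})$ annihilates $(h_{j},\ldots,h_{j+n-2})^T$ for every $j=2,\ldots,n+1$; these $n$ vectors are precisely the $n$ columns of the $(n-1)\times n$ matrix $H(2\!:\!n,1\!:\!n)$ (the last $n-1$ rows of $H$), so $\tilde f^{\,T} H(2\!:\!n,1\!:\!n)=0$. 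Since $H$ is nonsingular, its last $n-1$ rows are linearly independent, so this rectangular matrix has full row rank $n-1$ and trivial left kernel; hence $\tilde f=0$, so $f=0$, contradicting $fH=e_1^T$. No invertibility of any square sub-block, and no Pascal matrices, are needed: full row rank of a rectangular slice of $H$ suffices, and that is automatic.

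With that one correction your argument is complete and, arguably, cleaner than the paper's. The paper proves the lemma directly in two cases according to whether $H(1\!:\!n-1,2\!:\!n)$ is singular: if it is not, Cramer's rule gives $a_0\ne 0$; if it is, a dimension count on spans of shifted columns shows that the coefficient of $(h_1\ \ldots\ h_{n-1})^T$ in the then essentially unique representation of $(h_{n+1}\ \ldots\ h_{2n-1})^T$ is nonzero and equals $b_0$. Your single contradiction argument via the first row of $H^{-1}$ avoids the case split entirely, at the cost of being less explicit about \emph{which} of $a_0$, $b_0$ is nonzero (information the paper uses in the remark following the lemma).
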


\begin{proof}

Suppose $\left| H(1:n-1,2:n)\right| \ne 0$. Therefore, from Cramer's rule, $a_0\ne 0$.
Let $x_1,...,x_{n-1}$ be the unique scalars such that
$$x_1\left( \begin{array}{c} h_2\\ \vdots \\ h_n\end{array} \right) + ... +
x_{n-1}\left( \begin{array}{c} h_n\\ \vdots \\ h_{2n-2}\end{array} \right)=
\left( \begin{array}{c} h_{n+1}\\ \vdots \\ h_{2n-1}\end{array} \right).
$$
Hence,
$x=(x_0\, x_1\, ...\, x_{n-1})^T=\gamma a + b$ is the solution of $Hx=(h_{n+1}\, ... \, h_{2n-1} \, \gamma)^T$, with $x_0=0$, iff
$\gamma = x_1 h_{n+1} + ... + x_{n-1} h_{2n-1}$. For other complex numbers $\gamma$, $x_0=\gamma a_0 + b_0 \ne 0$, that is, $a_0\ne 0$ or $b_0\ne 0$. Notice that
$a_0\ne 0$, and $b_0=0$ iff $x_1 h_{n+1} + ... + x_{n-1} h_{2n-1}=0$.

Now, suppose $H(1:n-1,2:n)=H(2:n,1:n-1)$ is singular. First, since $H$ is nonsingular, the dimension of $span\{ H(2:n,1),...,H(2:n,n-1), H(2:n,n) \}$ is $(n-1)$, as well as the dimension of $span\{ H(1:n-1,1),...,H(1:n-1,n-1), H(1:n-1,n) \}$. Hence, $H(2:n,n) \notin
span\{ H(2:n,1),...,H(2:n,n-1)\}$, whose dimension is $n-2$. On the other side, $H(2:n,n) \in span\{ H(1:n-1,1),...,H(1:n-1,n)\}=span\{ H(1:n-1,1),H(2:n,1),...,H(2:n,n-1)\}$, and so,
there exist $x_0,...,x_{n-1}$, where $x_0$ is different from zero and unique, such that
$$
\left( \begin{array}{c} h_{n+1}\\ \vdots \\ h_{2n-1}\end{array} \right)=
x_0\left( \begin{array}{c} h_1\\ \vdots \\ h_{n-1}\end{array} \right) +
x_1\left( \begin{array}{c} h_2\\ \vdots \\ h_n\end{array} \right) +
... +
x_{n-1}\left( \begin{array}{c} h_n\\ \vdots \\ h_{2n-2}\end{array} \right).
$$
Observe that, in this case, for all $\gamma \in \C$, $x_0 = b_0\ne 0$, and $a_0=0$.
\qquad\end{proof}

\

From the above proof, there can be at most one complex number $\gamma$ such that
$p_{\gamma}(0)=-b_0 - \gamma a_0=0$
We can also conclude from the lemma \ref{nonzero} that zero is not a common root of $r(x) =
x^n - b_{n-1}x^{n-1} - ... - b_1x - b_0$ and $s(x)=a_{n-1}x^{n-1} + ...+a_1x + a_0$.

\

Now, define $H_{\gamma}^{\kappa} =
\left(\begin{array}{cccc} h_1 & ... & h_n & h_{n+1}\\
\vdots & \ddots & \vdots & \vdots \\
h_n & ... & h_{2n-1} & \gamma \\
h_{n+1}& ... &  \gamma & \kappa  \end{array} \right)$.
Since $H$ is nonsingular, $H_{\gamma}^{\kappa}$ is also nonsingular iff
$\kappa \ne \kappa_0=(h_{n+1}\,\, ... \,\, h_{2n-1} \,\, \gamma) \, \, H^{-1} (h_{n+1}\,\, ... \,\, h_{2n-1} \,\, \gamma)^T$,
which is equal to $(h_{n+1}\, ... \, h_{2n-1} \, \gamma) (b_0 + \gamma a_0 \,\,\, ... \,\,\, b_{n-2} + \gamma a_{n-2} \,\,\, b_{n-1} + \gamma a_{n-1})^T$.

\

Note that $H_{\gamma}^{\kappa} \left( \begin{array}{c} -b_0 - \gamma a_0 \\ \vdots \\ -b_{n-1} - \gamma a_{n-1} \\ 1\end{array} \right)=(\kappa - \kappa_0)
\left( \begin{array}{c} 0 \\ \vdots \\ 0 \\ 1\end{array} \right)$.
%
%
Therefore, lemma \ref{nonzero} can be rewritten
as the following lemma:

\begin{lemma}
\label{nonnonzero}
Let $H_{\gamma}^{\kappa} =
\left(\begin{array}{cccc} h_1 & ... & h_n & h_{n+1}\\
\vdots & \ddots & \vdots & \vdots \\
h_n & ... & h_{2n-1} & \gamma \\
h_{n+1}& ... &  \gamma & \kappa  \end{array} \right)$ be a Hankel matrix, where
$H = H_{\gamma}^{\kappa} (1:n,1:n)$
is nonsingular. Suppose that $H_{\gamma}^{\kappa}$ is also nonsingular, that is,
$\kappa \ne (h_{n+1}\,\, ... \,\, h_{2n-1} \,\, \gamma) \, \, H^{-1} (h_{n+1}\,\, ... \,\, h_{2n-1} \,\, \gamma)^T$. Let $p$ be the solution of $H_{\gamma}^{\kappa}x = e_{n+1}$. Then, except for one possible complex number $\gamma$,
$p_0\ne 0$.
\end{lemma}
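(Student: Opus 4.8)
The plan is to reduce Lemma~\ref{nonnonzero} directly to Lemma~\ref{nonzero} by exploiting the block structure of $H_\gamma^\kappa$ and the explicit formula already derived in the excerpt. Recall that the text has just observed
$$
H_{\gamma}^{\kappa} \left( \begin{array}{c} -b_0 - \gamma a_0 \\ \vdots \\ -b_{n-1} - \gamma a_{n-1} \\ 1\end{array} \right)=(\kappa - \kappa_0)\, e_{n+1}.
$$
Since $H_\gamma^\kappa$ is assumed nonsingular, $\kappa\ne\kappa_0$, so dividing by $\kappa-\kappa_0$ gives the solution $p$ of $H_\gamma^\kappa x = e_{n+1}$ explicitly: its first coordinate is $p_0 = -(b_0+\gamma a_0)/(\kappa-\kappa_0)$. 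Thus $p_0 = 0$ if and only if $b_0 + \gamma a_0 = 0$, where $a$ and $b$ are the vectors from Lemma~\ref{nonzero}.

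From here the argument is immediate. By Lemma~\ref{nonzero} we have $a_0\ne 0$ or $b_0\ne 0$. If $a_0\ne 0$, then $b_0+\gamma a_0=0$ has the unique solution $\gamma=-b_0/a_0$, so $p_0\ne 0$ for every other $\gamma$. If $a_0=0$, then (again by Lemma~\ref{nonzero}, or rather by the dichotomy in its proof) $b_0\ne 0$, whence $b_0+\gamma a_0 = b_0\ne 0$ for all $\gamma$, and $p_0$ never vanishes. In either case there is at most one exceptional value of $\gamma$, which is exactly the assertion. I would also remark, as the excerpt already does in the line ``From the above proof, there can be at most one complex number $\gamma$ such that $p_\gamma(0)=-b_0-\gamma a_0=0$'', that this exceptional $\gamma$ is the same one appearing there.

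The only point requiring a little care — and the closest thing to an obstacle — is justifying that the displayed identity really does determine $p$, i.e. that the vector $(-b_0-\gamma a_0,\dots,-b_{n-1}-\gamma a_{n-1},1)^T$ is a bona fide solution of $H_\gamma^\kappa x = (\kappa-\kappa_0)e_{n+1}$. This is a direct block computation: the top $n$ rows of $H_\gamma^\kappa$ are $\big(H \mid (h_{n+1}\,\dots\,h_{2n-1}\,\gamma)^T\big)$, so applying them to the candidate vector gives $-H(b+\gamma a) + (h_{n+1}\,\dots\,h_{2n-1}\,\gamma)^T$, which is zero because $H(b+\gamma a) = \gamma\cdot Ha + Hb = \gamma(h_{n+1}\,\dots\,h_{2n-1}\,0)^T + (\text{wait})$ — more precisely $Hb = (h_{n+1}\,\dots\,h_{2n-1}\,0)^T$ and $Ha = e_n$, so $H(b+\gamma a) = (h_{n+1}\,\dots\,h_{2n-1}\,\gamma)^T$ exactly, giving $0$ in the first $n$ entries; the last row produces $\kappa - \kappa_0$ by the definition of $\kappa_0$. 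Since all of these pieces are already assembled in the excerpt, the proof is essentially a two-line deduction once this verification is recorded, so I would keep it short and simply cite Lemma~\ref{nonzero} for the final dichotomy.
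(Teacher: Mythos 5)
Your proof is correct and is essentially the paper's own argument: the paper presents Lemma~\ref{nonnonzero} as a direct restatement of Lemma~\ref{nonzero} via the displayed identity $H_{\gamma}^{\kappa}\,(-b_0-\gamma a_0,\dots,-b_{n-1}-\gamma a_{n-1},1)^T=(\kappa-\kappa_0)e_{n+1}$, which gives $p_0=-(b_0+\gamma a_0)/(\kappa-\kappa_0)$ exactly as you derive. Your explicit verification of the block computation and of the dichotomy $a_0\ne 0$ or $b_0\ne 0$ only makes explicit what the paper leaves implicit.
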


\

Now, let $\alpha$ be any complex number and $q_{\gamma}(x) = p_{\gamma}(x+\alpha) = r(x+\alpha) - \gamma s(x+\alpha)$.
In an analogous way to the proof for $\alpha=0$, it will be shown that $r(\alpha)$ and $s(\alpha)$ cannot be both null because there can be only one complex number $\gamma$ such that $q_{\gamma}(0)=0$.
To prove this, we introduce some notations and definitions in the following.

\

\begin{definition}
Let $\alpha \in \C$.
$P_n[\alpha]$ be the $n\times n$ is the
lower triangular matrix defined for each $i,j\in \{  1, 2, \ldots, n\}$ by
$$\left( P_n[\alpha]\right)_{ij}= \left\{ \begin{array}{ccl}
\alpha^{i-j} \binom{i-1}{j-1} &,& \mbox{for } i \geqslant j; \\
0&,& \mbox{otherwise}.
\end{array} \right.
$$
$P_n[\alpha]$ is said a generalized lower triangular Pascal matrix.
If $\alpha = 1$,
$P_n[1]=P_n$ is called the $n\times n$ lower triangular Pascal matrix.
\end{definition}

\

\noindent Some results about these matrices (see \cite{cav}, \cite{aceto}) are listed in the following lemma:

\

\begin{lemma} Let $P_n[\alpha]$ a generalized lower triangular Pascal matrix. Then,
\label{facts}
\begin{enumerate}
\item[(a)] $P_n[0]=I_n$;
\item[(b)] $P_n[\alpha] P_n[\beta]=P_n[\alpha + \beta]$;
\item[(c)] $(P_n[\alpha])^{-1} = P_n[-\alpha]$;
\item[(d)] Let $\alpha\ne 0$ and let $G_n(\alpha)$ be the $n\times n$ diagonal matrix such that, for all
$k\in \{ 1,...,n\}$, $\left( G_n(\alpha)\right)_{kk} = \alpha^{k-1}$. Then $P_n[\alpha] = G_n(\alpha)P_n G_n(\alpha)^{-1}=G_n(\alpha)P_n G_n(\alpha^{-1})$. In particular, $P_n^{-1} = G_n(-1) P_n G_n(-1)$.
\end{enumerate}
\end{lemma}

\

\begin{definition}
For $s \in [0,1]$, the $n\times n$ Bernstein matrix $B_n^e(s)$ is the matrix defined
for each $i,j\in \{  1, 2, \ldots, n\}$ as follows:
$$
[B_n^e(s)]_{ij} = \left\{ \begin{array}{rcl}
\binom{i-1}{j-1}s^{j-1}(1-s)^{i-j}
&,& \mbox{for } i\ge j; \\
0&, & \mbox{otherwise}.
\end{array}
\right.
$$
\end{definition}
A very important fact about Bernstein matrices, which will be used here later, is the following proposition, whose proof can be found in \cite{aceto}:

\begin{proposition}
\label{bernpasc}
Let $s \in [0,1]$ and let $B_e(s)$ be a $n\times n$ Bernstein matrix Then,
$B_n^e(s) = P_n G_n(s) P_n^{-1},$
where $P_n$ is the $n\times n$ lower triangular Pascal matrix and $G_n(s)=diag([1,s,...,s^{n-1}])$.
\end{proposition}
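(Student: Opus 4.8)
The plan is to establish the identity $B_n^e(s) = P_n G_n(s) P_n^{-1}$ by direct comparison of matrix entries, using the combinatorial identities already packaged in Lemma \ref{facts}, in particular part (d) which tells us that conjugation by $G_n$ turns a Pascal matrix into a generalized Pascal matrix. First I would observe that $B_n^e(s)$ is lower triangular, so it suffices to check the entries $[B_n^e(s)]_{ij}$ for $i \ge j$ and verify they agree with $[P_n G_n(s) P_n^{-1}]_{ij}$. The right-hand side can be rewritten: since $P_n^{-1} = G_n(-1) P_n G_n(-1)$ by Lemma \ref{facts}(d), the product $P_n G_n(s) P_n^{-1}$ becomes a triple product of Pascal-type matrices interlaced with diagonals, which one can evaluate by summing over the single free index in the middle.

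Concretely, the main step is to expand
\[
[P_n G_n(s) P_n^{-1}]_{ij} = \sum_{k=j}^{i} (P_n)_{ik}\, s^{k-1}\, (P_n^{-1})_{kj}
= \sum_{k=j}^{i} \binom{i-1}{k-1} s^{k-1} (-1)^{k-j}\binom{k-1}{j-1},
\]
using the known formula $(P_n^{-1})_{kj} = (-1)^{k-j}\binom{k-1}{j-1}$ (itself a consequence of Lemma \ref{facts}(c) with $\alpha = 1$, or of (d)). Then I would apply the standard binomial "subset of a subset" identity $\binom{i-1}{k-1}\binom{k-1}{j-1} = \binom{i-1}{j-1}\binom{i-j}{k-j}$ to factor out $\binom{i-1}{j-1}$ and reindex the sum by $\ell = k-j$, reducing the sum to
\[
\binom{i-1}{j-1} s^{j-1} \sum_{\ell=0}^{i-j} \binom{i-j}{\ell} s^{\ell} (-1)^{\ell} = \binom{i-1}{j-1} s^{j-1}(1-s)^{i-j},
\]
by the binomial theorem. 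This is exactly $[B_n^e(s)]_{ij}$, which finishes the verification.

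An alternative, perhaps cleaner, route is to avoid indices entirely: note that $P_n G_n(s) P_n^{-1}$ and $B_n^e(s)$ are both lower triangular and polynomial in $s$, and check the claimed identity as an identity of matrices acting on the monomial basis — i.e., verify that both sides send the column vector of coefficients of a polynomial $f(t)$ (in the shifted basis $\{(1-t)^{i-1}\}$ or $\{t^{i-1}\}$, as appropriate) to the coefficients of $f$ composed with the affine reparametrization $t \mapsto st + (1-s)$. In that language, $G_n(s)$ is the scaling $t \mapsto st$ in the monomial basis, while the Pascal conjugation $P_n(\cdot)P_n^{-1}$ implements the basis change between monomials in $t$ and monomials in $t-1$ (equivalently, a translation by $1$), and composing scaling with the two translations yields the affine map fixing $t=1$, which is precisely what a Bernstein/de Casteljau subdivision matrix does. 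Either way the substance is the same binomial manipulation.

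The only real obstacle is bookkeeping: getting the summation limits and the sign pattern in $P_n^{-1}$ exactly right, and making sure the vanishing of off-triangular entries is handled (the sum telescopes to zero for $i < j$ because $\binom{i-1}{k-1} = 0$ when $k > i$, so the same formula covers that case automatically). Since the paper cites \cite{aceto} for this proposition, I expect the author simply to quote it or give the one-line index computation above; I would include the short computation for completeness, as it is entirely elementary once Lemma \ref{facts} is in hand.
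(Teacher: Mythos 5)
Your entrywise computation is correct: the sum $\sum_{k=j}^{i}\binom{i-1}{k-1}s^{k-1}(-1)^{k-j}\binom{k-1}{j-1}$ does reduce, via the subset-of-a-subset identity and the binomial theorem, to $\binom{i-1}{j-1}s^{j-1}(1-s)^{i-j}$, and the formula $(P_n^{-1})_{kj}=(-1)^{k-j}\binom{k-1}{j-1}$ is exactly what Lemma \ref{facts}(c) provides. The only point of comparison to make is that the paper does not prove this proposition at all --- it defers entirely to \cite{aceto} --- so your argument is not so much a different route as the missing one; your guess that the author would simply quote the reference was right. Your proof is the standard one and is self-contained given Lemma \ref{facts}. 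The second, ``coordinate-free'' sketch about affine reparametrizations is plausible but left vague (in particular, which basis the columns of $B_n^e(s)$ represent is not pinned down), so if you include only one argument, keep the explicit index computation; it is complete as written, including the observation that the same formula yields zero for $i<j$.
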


\

In the following, we present some relations between Pascal and Hankel matrices.

\

\begin{lemma}
\label{hankpascal}
Let $H$ be a $n\times n$ Hankel matrix and let $P_n$ be the $n\times n$ lower triangular Pascal matrix.
Then $P_n H P_n^T$ is still a Hankel matrix.
\end{lemma}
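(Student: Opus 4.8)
The plan is to exhibit an explicit entrywise identity showing that $(P_n H P_n^T)_{ij}$ depends only on $i+j$. Recall that $H_{k\ell} = h_{k+\ell-1}$ and that $(P_n)_{ik} = \binom{i-1}{k-1}$ (with the convention that the binomial coefficient vanishes when $k-1 > i-1$). First I would write out
\[
(P_n H P_n^T)_{ij} = \sum_{k=1}^{n}\sum_{\ell=1}^{n} \binom{i-1}{k-1}\binom{j-1}{\ell-1} h_{k+\ell-1},
\]
and reindex with $k' = k-1$, $\ell' = \ell-1$, so that the sum becomes $\sum_{k',\ell'} \binom{i-1}{k'}\binom{j-1}{\ell'} h_{k'+\ell'+1}$. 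Grouping the terms according to the value of $t = k'+\ell'$ gives
\[
(P_n H P_n^T)_{ij} = \sum_{t=0}^{(i-1)+(j-1)} h_{t+1} \sum_{k'=0}^{t} \binom{i-1}{k'}\binom{j-1}{t-k'}.
\]
By the Vandermonde convolution identity, the inner sum equals $\binom{(i-1)+(j-1)}{t} = \binom{i+j-2}{t}$. Hence
\[
(P_n H P_n^T)_{ij} = \sum_{t=0}^{i+j-2} \binom{i+j-2}{t} h_{t+1},
\]
which manifestly depends on $i$ and $j$ only through $i+j$. Setting $\tilde h_m = \sum_{t=0}^{m-1}\binom{m-1}{t} h_{t+1}$ for $m = 1, \ldots, 2n-1$, we get $(P_n H P_n^T)_{ij} = \tilde h_{i+j-1}$, so $P_n H P_n^T$ is the Hankel matrix generated by the sequence $(\tilde h_1, \ldots, \tilde h_{2n-1})$, as claimed.

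An alternative, slightly slicker route avoids the combinatorial identity: one can observe that the whole claim is the statement that conjugation by $P_n$ preserves the space of Hankel matrices, and this space is spanned by the matrices $E^{(m)}$ with $(E^{(m)})_{ij} = \delta_{i+j-1,\,m}$ for $m = 1, \ldots, 2n-1$. It would then suffice to check that each $P_n E^{(m)} P_n^T$ is again Hankel; since $E^{(m)}$ has a very simple anti-diagonal structure, $P_n E^{(m)} P_n^T$ has entries $\binom{i-1}{k-1}\binom{j-1}{m-k}$ summed over $k$, and the Vandermonde convolution reappears. Either way the combinatorial core is the same. I do not expect any genuine obstacle here: the only thing to be careful about is the indexing convention for the binomial coefficients (they must be taken to be zero outside the triangular range, which is exactly how $P_n$ is defined in the paper), and the bookkeeping that the transformed sequence $\tilde h$ still has length $2n-1$ so that the result is an honest $n\times n$ Hankel matrix. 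A remark worth appending is that, by Lemma \ref{facts}(d), the same argument applies verbatim with $P_n$ replaced by any generalized Pascal matrix $P_n[\alpha]$, since $G_n(\alpha)$ conjugation rescales the Hankel sequence by powers of $\alpha$ and thus also preserves the Hankel structure; this generality is what will be used later when the skew-diagonal is shifted.
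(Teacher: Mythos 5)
Your proposal is correct. The computation is sound: extending the inner sum over $k'$ to the full range $0\le k'\le t$ only adds vanishing terms (since $\binom{i-1}{k'}=0$ for $k'>i-1$ and likewise for the second factor), so the Vandermonde convolution applies and yields $(P_nHP_n^T)_{ij}=\sum_{t=0}^{i+j-2}\binom{i+j-2}{t}h_{t+1}$, which depends only on $i+j$.

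The route differs from the paper's in structure, though the combinatorial core is the same. The paper argues by induction on $n$: it observes that the leading principal block of $P_{n+1}HP_{n+1}^T$ is $P_nHP_n^T$ and that the product is symmetric, so it only needs to verify the local Hankel condition $(P_{n+1}HP_{n+1}^T)_{n+1,k}=(P_{n+1}HP_{n+1}^T)_{n,k+1}$ along the last row, and it does this by showing that both entries reduce, via the Vandermonde convolution, to the same weighted sum of the $h_s$. You instead compute every entry in closed form in one pass, using the same convolution identity, and read off the Hankel structure directly. What your version buys is the explicit formula $\tilde h_m=\sum_{t=0}^{m-1}\binom{m-1}{t}h_{t+1}$ for the generators of the transformed matrix, i.e.\ the statement that the new Hankel sequence is $P_{2n-1}$ applied to the old one --- which is precisely the content of Corollary~\ref{gol}, obtained in the paper only later and by a separate detour through Proposition~\ref{double}. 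Your closing remark about $P_n[\alpha]$ is also consistent with how the paper handles the general case (Corollary~\ref{diag}): it reduces to the $\alpha=1$ case by the factorization $P_n[\alpha]=G_n(\alpha)P_nG_n(\alpha^{-1})$ and the observation that diagonal scaling by powers of $\alpha$ preserves Hankel structure, rather than rerunning the convolution argument verbatim, but the two are equivalent.
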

\begin{proof}
The lemma obviously holds when $n=1$. Suppose it holds for all Hankel matrices $H$ of order $n\ge 1$.
Now, let $H$ be a $(n+1)\times (n+1)$ Hankel matrix and consider $P_{n+1} H P_{n+1}^T$.
Since $P_{n+1} H P_{n+1}^T$ is symmetric and
$P_{n+1} H P_{n+1}^T = [P_{n} H P_{n}^T\, v; v^T \, \kappa]$, for some $v\in  \C^n$, by induction it suffices to show that, for all
$k\in\{ 1,...,n-1\}$, $(P_{n+1} H P_{n+1}^T)_{n+1,k}= (P_{n+1} H P_{n+1}^T)_{n,k+1}$.
Now,
$$
(P_{n+1} H P_{n+1}^T)_{n+1,k}= e_{n+1}^T P_{n+1} \sum_{j=0}^{k-1}  \binom{k-1}{j} H e_{j+1}=
$$
$$
=\sum_{i=0}^{n}\sum_{j=0}^{k-1} \binom{n}{i} \binom{k-1}{j} e_{i+1}^TH e_{j+1}=
\sum_{s=2}^{n+k-1} h_{s-1} \sum_{i=0}^{s} \binom{n}{i}\binom{k-1}{s-i},
$$
which is equal, from Vandermonde convolution (\cite{knuth}), to
$$
\sum_{s=2}^{n+k-1} h_{s-1} \sum_{i=0}^{s} \binom{n-1}{i}\binom{k}{s-i}=
\sum_{i=0}^{n-1}\sum_{j=0}^{k} \binom{n-1}{i} \binom{k}{j} e_{i+1}^TH e_{j+1}=
$$
$$
=e_{n}^T P_{n+1} \sum_{j=0}^{k}  \binom{k}{j} H e_{j+1}=(P_{n+1} H P_{n+1}^T)_{n,k+1}.
$$
\qquad\end{proof}

\

\begin{corollary}
\label{diag}
Let $H$ be a $n\times n$ Hankel matrix and $\alpha$ be a complex number.
Then, $P_n[\alpha]H P_n[\alpha]^T$ is still a Hankel matrix.
\end{corollary}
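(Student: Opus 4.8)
The plan is to reduce Corollary~\ref{diag} to Lemma~\ref{hankpascal} by exploiting the diagonal-conjugation identity in part (d) of Lemma~\ref{facts}. The case $\alpha = 0$ is immediate from part (a), since $P_n[0] = I_n$ and $I_n H I_n^T = H$ is Hankel by hypothesis; so assume $\alpha \neq 0$.

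First I would write $P_n[\alpha] = G_n(\alpha) P_n G_n(\alpha)^{-1}$ using Lemma~\ref{facts}(d), so that
$$
P_n[\alpha] H P_n[\alpha]^T = G_n(\alpha) P_n G_n(\alpha)^{-1} H \bigl(G_n(\alpha)^{-1}\bigr)^T P_n^T G_n(\alpha)^T.
$$
Since $G_n(\alpha)$ is diagonal, it is symmetric, so $G_n(\alpha)^T = G_n(\alpha)$ and $\bigl(G_n(\alpha)^{-1}\bigr)^T = G_n(\alpha)^{-1} = G_n(\alpha^{-1})$. Hence
$$
P_n[\alpha] H P_n[\alpha]^T = G_n(\alpha) \bigl( P_n \, \widetilde H \, P_n^T \bigr) G_n(\alpha), \qquad \widetilde H := G_n(\alpha^{-1}) H G_n(\alpha^{-1}).
$$
The middle step is to observe that $\widetilde H$ is itself a Hankel matrix: its $(i,j)$ entry is $\alpha^{-(i-1)} h_{i+j-1} \alpha^{-(j-1)} = \alpha^{-(i+j-2)} h_{i+j-1}$, which depends only on $i+j$, so $\widetilde H$ has Hankel symbol $\tilde h_k = \alpha^{-(k-1)} h_k$. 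Then Lemma~\ref{hankpascal} applies to $\widetilde H$, giving that $P_n \widetilde H P_n^T$ is Hankel, say with symbol $g_k$.

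Finally I would check that conjugating a Hankel matrix by $G_n(\alpha)$ on both sides keeps it Hankel: the $(i,j)$ entry of $G_n(\alpha) (P_n \widetilde H P_n^T) G_n(\alpha)$ is $\alpha^{i-1} g_{i+j-1} \alpha^{j-1} = \alpha^{i+j-2} g_{i+j-1}$, again a function of $i+j$ alone. This completes the argument. There is no real obstacle here — the only thing to be careful about is keeping track of which power of $\alpha$ attaches to which index and verifying symmetry of the diagonal factors so that the transposes collapse correctly; everything else is a direct appeal to the two preceding results. (One could equally phrase the whole thing in one line by noting that $H \mapsto G_n(c) H G_n(c)$ maps Hankel matrices to Hankel matrices for every scalar $c$, and that $P_n[\alpha] H P_n[\alpha]^T = G_n(\alpha)\bigl(P_n (G_n(\alpha^{-1}) H G_n(\alpha^{-1})) P_n^T\bigr) G_n(\alpha)$.)
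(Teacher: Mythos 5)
Your argument is correct and follows essentially the same route as the paper: decompose $P_n[\alpha]=G_n(\alpha)P_nG_n(\alpha^{-1})$ via Lemma~\ref{facts}(d), observe that two-sided conjugation by a diagonal matrix $G_n(c)$ sends a Hankel matrix with symbol $h_k$ to one with symbol $c^{k-1}h_k$, and invoke Lemma~\ref{hankpascal} for the Pascal conjugation in the middle. Your write-up is just a more detailed spelling-out of the paper's proof, with the transpose bookkeeping made explicit.
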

\begin{proof}
For $\alpha=0$, the result follows from lemma \ref{hankpascal}. Let $\alpha \ne 0$. Since from lemma \ref{facts} $P_n[\alpha] = G_n(\alpha)P_n G_n(\alpha^{-1})$,
where $G_n(\alpha)= diag\, (1,\alpha,...,\alpha^{n-1})$,
it suffices to show that $G(\alpha) H G(\alpha)$ is a Hankel matrix. But this is obviously true, for $\left( G(\alpha) H G(\alpha) \right)_{ij} = h_{i+j-1} \alpha^{i+j-2}$.
\qquad\end{proof}

\

Next we give a proof that $r(x)$ and $s(x)$ don't have any common root by using a generalized Pascal matrix technique.

\begin{proposition}
\label{pol}
Let $H$ be a $n\times n$ nonsingular Hankel matrix.
Let $a=(a_0 \, a_1 \, ... \, a_{n-1})^T$ and $b=(b_0 \, b_1 \, ... \, b_{n-1})^T$ be the solutions of $Ha=e_n$ and $Hb = (h_{n+1}\, ... \, h_{2n-1} \, 0)^T$, respectively.
Then $r(x) = x^n - b_{n-1}x^{n-1} - ... - b_1x - b_0$ and $s(x)=a_{n-1}x^{n-1} + ...+a_1x + a_0$ don't have
any common root.
\end{proposition}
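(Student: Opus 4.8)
The plan is to reduce the general claim --- that $r(x)$ and $s(x)$ share no root at all --- to the special case $\alpha=0$ that has already been handled by Lemma \ref{nonzero}, via conjugation by a generalized Pascal matrix. The key observation is the shift identity $q_\gamma(x)=p_\gamma(x+\alpha)=r(x+\alpha)-\gamma s(x+\alpha)$: a common root of $r$ and $s$ at $x=\alpha$ is exactly a common root of $r(x+\alpha)$ and $s(x+\alpha)$ at $x=0$. So it suffices to show that for every $\alpha$ there is at most one $\gamma$ with $q_\gamma(0)=0$, and in particular $r(\alpha)$ and $s(\alpha)$ cannot both vanish (which would force $q_\gamma(0)=0$ for \emph{all} $\gamma$).

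First I would identify the Hankel matrix whose linear-prediction polynomials are $r(x+\alpha)$ and $s(x+\alpha)$. The natural candidate is $\widetilde H = P_n[-\alpha]\, H\, P_n[-\alpha]^T$ (or $P_n[\alpha]HP_n[\alpha]^T$; the sign is to be pinned down in the routine computation), which is again a nonsingular Hankel matrix by Corollary \ref{diag} and by invertibility of $P_n[\alpha]$ from Lemma \ref{facts}(c). The main technical step is to verify that the solutions $\tilde a, \tilde b$ of $\widetilde H\tilde a = e_n$ and $\widetilde H \tilde b = (\tilde h_{n+1}\,\ldots\,\tilde h_{2n-1}\,0)^T$ produce companion-matrix characteristic data whose polynomials $\tilde r(x)=x^n-\tilde b_{n-1}x^{n-1}-\cdots-\tilde b_0$ and $\tilde s(x)=\tilde a_{n-1}x^{n-1}+\cdots+\tilde a_0$ satisfy $\tilde r(x)=r(x+\alpha)$ and $\tilde s(x)=c\,s(x+\alpha)$ for some nonzero constant $c$. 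Concretely, conjugating the companion matrix $C_\gamma=\mathrm{compan}(x_\gamma)$ by $P_n[\alpha]$ shifts its spectrum by $\alpha$ (this is the standard fact that $P_n[\alpha]$ intertwines the shift), so the characteristic polynomial of the conjugated companion matrix is $p_\gamma(x+\alpha)=q_\gamma(x)$; matching this against $\tilde r(x)-\gamma\tilde s(x)$ gives the identification, up to the scalar $c$ coming from the normalization $\widetilde H\tilde a=e_n$ versus $Ha=e_n$.

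Once that identification is in place, the proposition follows immediately: apply the already-established consequence of Lemma \ref{nonzero} (stated in the line ``there can be at most one complex number $\gamma$ such that $p_\gamma(0)=-b_0-\gamma a_0=0$'') to the Hankel matrix $\widetilde H$. This says $\tilde r(0)$ and $\tilde s(0)$ are not both zero, i.e.\ $r(\alpha)$ and $s(\alpha)$ are not both zero. Since $\alpha$ was arbitrary, $r$ and $s$ have no common root, which is what we want; the case $\alpha=0$ is subsumed and also re-covers the earlier remark that $0$ is not a common root. I expect the main obstacle to be the bookkeeping in the second step: carefully tracking how the Pascal conjugation acts on the right-hand side vectors $e_n$ and $(h_{n+1}\,\ldots\,h_{2n-1}\,0)^T$ --- the latter is \emph{not} simply transformed, since $\tilde h_{n+1},\ldots,\tilde h_{2n-1}$ are new Hankel entries of $\widetilde H$ --- and confirming that the spectral-shift property of $P_n[\alpha]$ on companion matrices really does yield $q_\gamma(x)=\tilde r(x)-\gamma\tilde s(x)$ with the scalar $c$ nonzero, so that common roots are genuinely preserved.
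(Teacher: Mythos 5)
Your proposal is correct and follows essentially the same route as the paper: reduce to the $\alpha=0$ case (Lemma \ref{nonzero}) by conjugating with a generalized Pascal matrix, using the facts that $P[\alpha]^T$ sends the coefficient vector of a polynomial to that of its shift by $\alpha$ and that Pascal conjugation preserves Hankel structure (Corollary \ref{diag}). The only difference is that the paper sidesteps the bookkeeping you flag by working with the bordered $(n+1)\times(n+1)$ matrix $H_{\gamma}^{\kappa}$, for which $p_{\gamma}=(H_{\gamma}^{\kappa})^{-1}e_{n+1}$ when $\kappa=1+\kappa_0$, so that $q_{\gamma}=P_{n+1}[\alpha]^{T}p_{\gamma}$ is automatically the solution of $\widehat H_{\gamma}^{\kappa}q_{\gamma}=e_{n+1}$ and Lemma \ref{nonnonzero} applies directly, with no need to identify the shifted polynomials $\tilde r$ and $\tilde s$ separately.
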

\begin{proof} Let $\gamma \in \C$ and let $p_{\gamma}=\left( -b_0-\gamma a_0\, \, ... \, -b_{n-1}-\gamma a_{n-1}\, \, 1\right)^T$. Let $q_{\gamma}=(q_0 \, ... \, q_{n-1} \, 1)^T$ be the vector of coefficients of the polynomial $r(x+\alpha)-\gamma s(x+\alpha)$.
We note that
$q_{\gamma} = P_{n+1}[\alpha]^T p_{\gamma} =  P_{n+1}[\alpha]^T (H_{\gamma}^{\kappa})^{-1} e_{n+1}$, for $\kappa = 1+\kappa_0$. Thus, $H_{\gamma}^{\kappa} P_{n+1}[\alpha]^{-T} q_{\gamma} = e_{n+1}$, and so,
$$\widehat H_{\gamma}^{\kappa}q_{\gamma} = P_{n+1}[\alpha]^{-1} H_{\gamma}^{\kappa} P_{n+1}[\alpha]^{-T} q_{\gamma} = P_{n+1}[-\alpha] H_{\gamma}^{\kappa} P_{n+1}[-\alpha]^T q_{\gamma}=e_{n+1}.$$
$\widehat H_{\gamma}^{\kappa}$ is also nonsingular and, from corollary \ref{diag}, is a Hankel matrix.
Since $H_{\gamma}^{\kappa}= H_0^0 + \gamma \, ( e_{n+1} e_n^T + e_n e_{n+1}^T) + \kappa \, e_{n+1}e_{n+1}^T$,
we see that
$\widehat H_{\gamma}^{\kappa} = \widehat H_{0}^{0} + \gamma \, ( e_{n+1} e_n^T + e_n e_{n+1}^T) +
(\kappa -2\,n\, \alpha) \, e_{n+1}e_{n+1}^T$. That is,
$$\widehat H_{\gamma}^{\kappa} =
\left(\begin{array}{cccc} \hat h_1 & ... & \hat h_n & \hat h_{n+1}\\
\vdots & \ddots & \vdots & \vdots \\
\hat h_n & ... & \hat h_{2n-1} & \hat \gamma \\
\hat h_{n+1}& ... &  \hat \gamma & \hat \kappa  \end{array} \right),$$
where $\widehat H_{\gamma}^{\kappa}(1:n,1:n) = \widehat H =P_n[-\alpha] H P_n[-\alpha]^T$ is nonsingular and $\hat \gamma = \gamma + (\widehat H_{0}^{0})_{n+1,n}$.
Thus, from lemma \ref{nonnonzero}, except for one possible complex number $\gamma$,
$(q_{\gamma})_0\ne 0$.
\qquad\end{proof}

\

Note that $(q_{\gamma})_0= 0$ only when $s(\alpha)\ne 0$, that is,
when $\left| \widehat H_{\gamma}^{\kappa}(1:n-1,2:n)\right| = \left| \widehat H(1:n-1,2:n)\right|\ne 0$.
In this case, $\gamma = r(\alpha)/s(\alpha)$.

\

\begin{proposition}
\label{theprop}
Let $\gamma \in \C$.
Let $p_{\gamma}(x) = x^n - b_{n-1}x^{n-1} - ... - b_0 - \gamma (a_{n-1}x^{n-1} + ... + a_0)=r(x) - \gamma s(x)$
the characteristic polynomial of $C_{\gamma} = H_1(\gamma)H^{-1}$, where
$H_1(\gamma)$ is the Hankel matrix defined by $H_1(\gamma)e_k = He_{k+1}$ for $k=1,...,n-1$
and $H_1(\gamma) e_n = (h_{n+1}\, ... \, h_{2n-1} \, \gamma)^T$, that is, $C_{\gamma} = [e_2^T; ...; e_n^T;
(h_{n+1}\, ... \, h_{2n-1} \, \gamma)H^{-1}]$. Then the set of scalars $\gamma$
such that $C_{\gamma}$ is not diagonalizable is finite.
\end{proposition}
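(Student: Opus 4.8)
The plan is to reduce the statement to Proposition \ref{pol}, which has already done the essential work. Since $C_\gamma$ is a companion matrix it is nonderogatory: every eigenvalue has a single Jordan block, so $C_\gamma$ fails to be diagonalizable precisely when its characteristic polynomial $p_\gamma(x) = r(x) - \gamma s(x)$ has a multiple root. Thus I want to show that $\{\gamma \in \C : p_\gamma \text{ has a repeated root}\}$ is finite. The strategy is the classical resultant/discriminant argument combined with the information that $r$ and $s$ share no common root.

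First I would recall that $p_\gamma$ has a multiple root at some $x_0$ if and only if $p_\gamma(x_0) = 0$ and $p_\gamma'(x_0) = 0$ simultaneously, i.e. the resultant $\mathrm{Res}_x(p_\gamma(x), p_\gamma'(x)) = \mathrm{disc}_x(p_\gamma)$ vanishes. Since $p_\gamma$ is monic of degree $n$ in $x$ with coefficients that are affine (degree $\le 1$) polynomials in $\gamma$, the discriminant $\Delta(\gamma) := \mathrm{disc}_x(p_\gamma)$ is a polynomial in $\gamma$. The set $S$ of bad parameters is exactly the zero set of $\Delta$, so $S$ is either finite or all of $\C$; the latter happens iff $\Delta \equiv 0$, i.e. iff $p_\gamma$ has a repeated root for every $\gamma$. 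So it remains to exhibit a single $\gamma_0$ for which $p_{\gamma_0}$ has only simple roots — equivalently, to rule out $\Delta \equiv 0$.

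This is where Proposition \ref{pol} does the heavy lifting. That proposition (applied with arbitrary shift $\alpha$) says: for each $\alpha \in \C$, the polynomial $q_\gamma(x) = p_\gamma(x + \alpha)$ satisfies $(q_\gamma)_0 \ne 0$ for all but at most one value of $\gamma$; and $(q_\gamma)_0 = p_\gamma(\alpha)$, so $p_\gamma(\alpha) = 0$ forces $\gamma = r(\alpha)/s(\alpha)$ (requiring also $s(\alpha) \ne 0$). Equivalently, $r$ and $s$ have no common root, since a common root $x_0$ would give $p_\gamma(x_0) = 0$ for every $\gamma$, contradicting the "at most one $\gamma$" conclusion at $\alpha = x_0$. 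Now suppose, for contradiction, that $\Delta \equiv 0$. Then for each $\gamma$ the polynomial $p_\gamma$ has a repeated root $x_0(\gamma)$ with $p_\gamma(x_0) = p_\gamma'(x_0) = 0$, hence $r(x_0) = \gamma s(x_0)$ and $r'(x_0) = \gamma s'(x_0)$. Since $s$ and $r$ share no root, $s(x_0) \ne 0$ (otherwise $r(x_0) = 0$ too), so $\gamma = r(x_0)/s(x_0)$ and the pair $(r s' - r' s)(x_0) = 0$. But $r s' - r' s$ is a fixed nonzero polynomial — nonzero because $r/s$ is nonconstant (as $\deg r = n > \deg s$) — so it has only finitely many roots $x_0$, giving only finitely many admissible values $\gamma = r(x_0)/s(x_0)$, contradicting the supposition that every $\gamma$ is bad. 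Hence $\Delta \not\equiv 0$ and $S$ is finite.

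The main obstacle is purely bookkeeping: making precise that $\Delta(\gamma)$ is genuinely a polynomial in $\gamma$ (immediate from the determinantal Sylvester-matrix formula, since each entry is affine in $\gamma$) and that "$p_\gamma$ has a multiple root" is equivalent to "$\Delta(\gamma) = 0$" even accounting for the monic leading coefficient (no degree drop occurs because $p_\gamma$ is monic of degree $n$ for all $\gamma$, as $s$ has degree $\le n-1$). One should also note the degenerate possibility that $s \equiv 0$; but then $p_\gamma = r$ is independent of $\gamma$ and $S$ is empty or all of $\C$ according to whether $r$ has a repeated root, and Lemma \ref{nonzero} guarantees $s \ne 0$ when $|H(1{:}n{-}1, 2{:}n)| \ne 0$; the remaining case $a = 0$ is handled symmetrically. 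Once these points are dispatched the argument is short. I would present it as: observe $C_\gamma$ nonderogatory $\Rightarrow$ reduce to multiple roots of $p_\gamma$ $\Rightarrow$ discriminant polynomial $\Delta(\gamma)$ $\Rightarrow$ $\Delta \equiv 0$ would force infinitely many bad $\gamma$ to come from finitely many roots of $rs' - r's$, contradiction via Proposition \ref{pol}.
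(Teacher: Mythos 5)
Your proof is correct and, at its core, identical to the paper's: both reduce to simplicity of the roots of $p_\gamma$ via nonderogatoriness of the companion matrix, invoke Proposition \ref{pol} to guarantee $s(\alpha)\ne 0$ at any root $\alpha$, and conclude that every bad $\gamma$ must equal $r(\alpha)/s(\alpha)$ for $\alpha$ among the finitely many (at most $2(n-1)$) roots of $rs'-r's$. The discriminant scaffolding $\Delta(\gamma)$ is harmless but redundant, since your closing contradiction argument already shows directly that the bad set is finite without first establishing the finite-or-all-of-$\C$ dichotomy.
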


\begin{proof}
$C_{\gamma}$ is a companion matrix, and hence, a nonderogatory matrix. Thus, it suffices to show that the set of scalars $\gamma$ such that the spectrum of $C_{\gamma}$ is not simple is finite.

Let $\alpha\in \C$ be an eigenvalue of $C_{\gamma}$, that is, a root of $p_{\gamma}(x)$.
Therefore, $r(\alpha) = \gamma s(\alpha)$.
Then, from proposition \ref{pol}, $s(\alpha)\ne 0$.
So, there are two cases:
\begin{romannum}
\item $r(\alpha)=0$, and this occurs iff $\gamma=0$. In this case,
$C_{0}$ is not diagonalizable iff $r'(\alpha)=0$.
\item $r(\alpha)\ne 0$, which means that $\gamma = r(\alpha)/s(\alpha)$. Therefore,
$p_{\gamma}'(\alpha)=0$ iff $r'(\alpha)=s'(\alpha)=0$, or $s'(\alpha)\ne 0$ and $r'(\alpha)=\gamma s'(\alpha)$.
\end{romannum}
Therefore, since $s\ne 0$ and $r/s$ is not a constant,
$\alpha$ is contained in the set of the roots of $r's-rs'$, which has at most $2(n-1)$ elements.
Hence, we can conclude that $\{ \gamma \in \C\, | \, C_{\gamma} \mbox { is not diagonalizable}\}$
is finite and has at most $2(n-1)$ elements.
\qquad\end{proof}

\

We can now state the following theorem:

\

\begin{theorem}
\label{thethe}
Let $H$ be a $n\times n$ nonsingular Hankel matrix.
Let $r(x) = x^n - b_{n-1}x^{n-1} - ... - b_0$ and $s(x)=a_{n-1}x^{n-1} + ... + a_0$, where
$a=(a_0 \, a_1 \, ... \, a_{n-1})^T$ and $b=(b_0 \, b_1 \, ... \, b_{n-1})^T$ are such that $Ha=e_n$ and $Hb = (h_{n+1}\, ... \, h_{2n-1} \, 0)^T$. Let $S = \{\, \alpha \in \C \, | \, (rs'-r's)(\alpha)=0 \mbox{ and } s(\alpha)\ne 0\, \}$ and $T = \{ r(\alpha)/s(\alpha)\, | \, \alpha \in S\}$. Then, for all $\gamma \in \C -T$,
$H = V_{\gamma} D_{\gamma} V_{\gamma}^T$, where $V_{\gamma} = vander (\alpha_1,...,\alpha_n)$, $D_{\gamma} = diag\, (V_{\gamma}^{-1}He_1)$, $\{\, \alpha_1,...,\alpha_n\, \} = \lambda(C_{\gamma})$, and
$C_{\gamma}$ is the companion matrix whose last row is
$(b_0 + \gamma a_0\, ... \, b_{n-1}+\gamma a_{n-1})$.
\end{theorem}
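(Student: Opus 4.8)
The plan is to assemble the theorem from the pieces already established, treating it essentially as a corollary of Proposition~\ref{theprop} together with a standard diagonalization argument. First I would fix $\gamma \in \C - T$ and observe that, by construction, $T$ is precisely the set of bad values identified in the proof of Proposition~\ref{theprop} (the values $r(\alpha)/s(\alpha)$ with $\alpha$ a repeated root of $p_\gamma$), so outside $T$ the companion matrix $C_\gamma$ whose last row is $(b_0 + \gamma a_0\ \ldots\ b_{n-1}+\gamma a_{n-1})$ has a simple spectrum; in particular it is diagonalizable with $n$ distinct eigenvalues $\alpha_1, \ldots, \alpha_n$. One small caveat I would address explicitly: the case $\gamma = 0$ still has to be allowed provided $r$ has no repeated roots, so I would note that $0 \in T$ iff $r$ itself has a multiple root, matching case (i) of Proposition~\ref{theprop}.

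Next I would recall that $C_\gamma$ is a companion matrix, so its eigenvector matrix is exactly the Vandermonde matrix $V_\gamma = \mathrm{vander}(\alpha_1,\ldots,\alpha_n)$ built on its eigenvalues; since the $\alpha_i$ are distinct, $V_\gamma$ is invertible and $C_\gamma = V_\gamma \Lambda V_\gamma^{-1}$ with $\Lambda = \mathrm{diag}(\alpha_1,\ldots,\alpha_n)$. The key linear-algebra identity to pin down is the relation between $C_\gamma$ and $H$. From Proposition~\ref{theprop} we have $C_\gamma = H_1(\gamma) H^{-1}$, where $H_1(\gamma)$ is the Hankel matrix with $H_1(\gamma)e_k = He_{k+1}$ for $k<n$ and $H_1(\gamma)e_n = (h_{n+1}\ \ldots\ h_{2n-1}\ \gamma)^T$. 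Equivalently, $H_1(\gamma) = C_\gamma H$, and the point is that \emph{both} $H$ and $C_\gamma H$ are symmetric (Hankel); that is, $C_\gamma H = H C_\gamma^T$, which rearranges to $H^{-1} C_\gamma H = C_\gamma^T$, i.e. $H$ intertwines $C_\gamma$ with its transpose. I would verify the symmetry of $H_1(\gamma)$ directly from its definition — its entries are $h_2, \ldots, h_{2n-1}$ on the first $n-1$ anti-diagonals and $\gamma$ in the corner — so it is visibly Hankel, hence symmetric.

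From the intertwining relation $C_\gamma H = H C_\gamma^T$ the factorization drops out: substituting $C_\gamma = V_\gamma \Lambda V_\gamma^{-1}$ gives $V_\gamma \Lambda V_\gamma^{-1} H = H V_\gamma^{-T} \Lambda V_\gamma^T$, hence $\Lambda (V_\gamma^{-1} H V_\gamma^{-T}) = (V_\gamma^{-1} H V_\gamma^{-T}) \Lambda$. Since $\Lambda$ has distinct diagonal entries, any matrix commuting with it is diagonal, so $V_\gamma^{-1} H V_\gamma^{-T} = D_\gamma$ for some diagonal $D_\gamma$, i.e. $H = V_\gamma D_\gamma V_\gamma^T$. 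Finally I would identify $D_\gamma$ explicitly: from $H = V_\gamma D_\gamma V_\gamma^T$ we get $V_\gamma^{-1} H = D_\gamma V_\gamma^T$, and reading off the first column (noting that the first column of $V_\gamma^T$ is the all-ones vector, since $\mathrm{vander}$ has a row of $1$'s that becomes a column under transpose) yields $D_\gamma = \mathrm{diag}(V_\gamma^{-1} H e_1)$, exactly as claimed.

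The main obstacle is not any single deep step but getting the bookkeeping of the intertwining relation exactly right — in particular making sure the convention for $\mathrm{vander}$ (rows versus columns, ascending versus descending powers) is the one for which a companion matrix with last row equal to the polynomial coefficients has that Vandermonde as its eigenvector matrix, and that the same convention makes the first column of $V_\gamma^T$ the vector of ones so that $D_\gamma = \mathrm{diag}(V_\gamma^{-1}He_1)$ comes out cleanly. I would also want to be slightly careful that $D_\gamma$ has no zero entries (equivalently $H$ nonsingular forces $\det V_\gamma^2 \det D_\gamma \ne 0$), which is automatic here but worth a one-line remark since the factorization is most useful when $D_\gamma$ is genuinely a nonsingular diagonal matrix.
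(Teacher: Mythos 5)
Your proposal is correct, and it reaches the factorization by a genuinely different mechanism than the paper for the central linear-algebra step. Both arguments share the setup: Proposition~\ref{theprop} gives simplicity of $\lambda(C_\gamma)$ off $T$, and the identity $C_\gamma = H_1(\gamma)H^{-1}$ with $V_\gamma$ the (column-of-ascending-powers) eigenvector matrix of the companion matrix. From there the paper does not invoke symmetry at all: it writes $V_\gamma^{-1}H_1 = \Lambda V_\gamma^{-1}H$ and then uses the shift property $H_1 e_k = He_{k+1}$ for $k<n$ to recurse column by column, obtaining $V_\gamma^{-1}He_i = \Lambda^{i-1}d$ with $d = V_\gamma^{-1}He_1$, which is literally $D_\gamma V_\gamma^T e_i$; the identification of $D_\gamma$ falls out in the same stroke. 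You instead use the symmetry of $H$ and of $H_1(\gamma)$ to get the intertwining relation $C_\gamma H = HC_\gamma^T$, conjugate to $\Lambda(V_\gamma^{-1}HV_\gamma^{-T}) = (V_\gamma^{-1}HV_\gamma^{-T})\Lambda$, and then appeal to the fact that the commutant of a diagonal matrix with distinct entries consists of diagonal matrices, identifying $D_\gamma$ afterwards from the first column. Your route is the classical one (it isolates cleanly where distinctness of the eigenvalues is used, and generalizes naturally to the confluent case via block commutants); the paper's is more constructive and elementary, computing every column of $V_\gamma^{-1}H$ explicitly without the commutant lemma. Your side remarks --- the $\gamma=0$ case being covered because $0\in T$ exactly when $r$ has a multiple root, the \emph{vander} convention, and the nonvanishing of the diagonal entries of $D_\gamma$ --- are all accurate and harmless additions.
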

\begin{proof}
From proposition \ref{theprop}, for all $\gamma \in \C -T$, $\lambda(C_{\gamma})$ is simple. Suppose
$\{\, \alpha_1,...,\alpha_n\, \} = \lambda(C_{\gamma})$. Let $v=(h_{n+1}\, ... \, h_{2n-1} \, \gamma)^T$ and $H_1 = [H(2:n,:);v]$.
Then,
$$C_{\gamma}=H_1H^{-1}=V_{\gamma}\, diag([\alpha_1,...,\alpha_n])\, V_{\gamma}^{-1},$$
where $V_{\gamma}e_i = \left( 1 \, \alpha_i \, ... \, \alpha_i^{n-1}\right)^T$, for all $i\in \{ 1,...,n\}$.
So,
$$V_{\gamma}^{-1}H_1=  \, diag([\alpha_1,...,\alpha_n])\, V_{\gamma}^{-1} H.$$
Let $d =(d_1\, ... \, d_n)^T = V_{\gamma}^{-1}He_1$.
Hence, for all $i\in \{ 1,...,n\}$,
$$V_{\gamma}^{-1}He_i = diag([\alpha_1,...,\alpha_n])^{i-1} d=\left( d_1\alpha_1^{i-1}\, ... \, d_n \alpha_n^{i-1} \right)^T= D_{\gamma} V_{\gamma}^T e_i.$$
\qquad\end{proof}

\section{B\'ezier curve as a Hankel form}

Efficient methods to compute B\'ezier curves of degree $n-1$ (\cite{bezier}) are fundamentals tools in Computed-Aided Geometric Design area. The Casteljau's algorithm is a widespread method for this computation. However, for each $s\in (0,1)$ it demands ${\cal O}(n^2)$ multiplications. For $n$ not very large, there are more efficient methods, like the ones introduced in \cite{phien}, where the computation of points on these curves is carried out by generalized Ball curves, or the ones presented in \cite{licio-sacht}, which use fast Pascal matrix-multiplication.
Here we show that we can describe a B\'ezier curve as a Hankel form and, hence, we see that we can easily compute points of the curve from a Vandermonde factorization of the associated Hankel matrix.

Let $Q_0 = (x_0,y_0)$, $Q_1=(x_1,y_1)$, ..., $Q_{n-1}=(x_{n-1},y_{n-1})$
be $n$ points in $\R^2$.
B\'ezier has his name on the curve $B$ defined from these $n$ points
as follows:
$$
B(s) = \left( \begin{array}{c} b_1(s)\\ b_2(s) \end{array} \right)= \sum_{i=0}^{n-1} \binom{n-1}{i} s^i (1-s)^{n-1-i}Q_i, \hspace{1cm} s \in [0,1].
$$
Let $x= (x_0 \, ... \, x_{n-1})^T$ and $x= (y_0 \, ... \, y_{n-1})^T$.
Then, for each $s\in [0,1]$,
$$
b_1(s) = e_n^T B_n^e(s) x \mbox{ and } b_2(s) = e_n^T B_n^e(s) y,
$$
where $B_n^e(s)$ is a $n\times n$ Bernstein matrix. Thus, from lemma \ref{facts}, for each $s\in [0,1]$,
\begin{equation}
\label{pascalbezier}
b_1(s) = e_n^T P_n G_n(s) P_n^{-1} x \mbox{ and } b_2(s) = e_n^T P_n G_n(s) P_n^{-1} y.
\end{equation}
In the following, we discuss different approaches that make use of (\ref{pascalbezier}) to compute a B\'ezier curve.

\

We can notice that, if
$B(s) = B_{Q_0Q_1\ldots Q_{n-1}}(s)$ denotes the B\'ezier curve determined by the points $Q_0$, $Q_1$, ...,
$Q_{n-1}$, then
$$B(s) = (1-s)B_{Q_0Q_1\ldots Q_{n-2}}(s) + sB_{Q_1Q_2\ldots Q_{n-1}}(s).$$
Without loss of generality, from now on we will suppose that $n$, the number of control points of a B\'ezier curve, is odd: $n=2m-1$, $m>1$. In this case,
it is easy to conclude by induction that, for all $k=0,...,m-1$,
$${B(s) = \sum_{j=0}^{k} \binom{k}{j} (1-s)^{k-j} s^j B_{Q_jQ_{j+1}\ldots Q_{j+n-k-1}}(s) }.$$
Particularly, for $k=m-1$ we have
$$
B(s) = \sum_{j=0}^{m-1} \binom{m-1}{j} (1-s)^{m-1-j} s^j B_{Q_jQ_{j+1}\ldots Q_{j+m-1}}(s),
$$
and so,
$$
b_1(s)= \sum_{j=0}^{m-1} \binom{m-1}{j} (1-s)^{m-1-j} s^j e_m^T P_mG(t) P_m^{-1}x_{j...j+m-1},$$
$$
b_2(s)= \sum_{j=0}^{m-1} \binom{m-1}{j} (1-s)^{m-1-j} s^j e_m^T P_mG(t) P_m^{-1}y_{j...j+m-1},
$$
where $x_{j...j+m-1}$ and $y_{j...j+m-1}$ denote the column vectors
$(x_j \ldots x_{j+m-1})^T$ and  $(y_j \ldots y_{j+m-1})^T$, respectively, for
$j=0,...,m-1$.
However,
$$
\sum_{j=0}^{m-1} \binom{m-1}{j} (1-s)^{m-1-j} s^j e_m^T P_mG(t) P_m^{-1}x_{j...j+m-1}=
$$
$$
=e_m^T P_mG(t) P_m^{-1}\left(\sum_{j=0}^{m-1} \binom{m-1}{j} (1-s)^{m-1-j} s^j x_{j...j+m-1}\right),
$$
and
$
\sum_{j=0}^{m-1} \binom{m-1}{j} (1-s)^{m-1-j} s^j x_{j...j+m-1}
$
is a column vector whose ith coordinate is
$e_m^TP_mG(t)P_m^{-1} x_{i-1...m+i-2}$.
Thus, we can state the following lemma, from which we can conclude that each coordinate of a B\'ezier curve is a Hankel form:

\

\begin{lemma}\label{lemab}
Let $n=2m-1$, where $m$ is an integer greater than 1 and
let $B(s) = \left( b_1(s) \, b_2(s) \right)^T$ be a B\'ezier curve of degree $n-1$
defined from  $n$ points
$Q_0 = (x_0,y_0)$, $Q_1=(x_1,y_1)$, ..., $Q_{n-1}=(x_{n-1},y_{n-1})$
in $\R^2$. Then
$$b_1(s) = e_m^T B_m^e(s) H_x (B_m^e(s))^Te_m \mbox{ and }
b_2(s) = e_m^T B_m^e(s) H_y (B_m^e(s))^Te_m,$$ where
$H_x=hankel(C_x,R_x)$ and $H_y=hankel(C_y,R_y)$ are $m\times m$ Hankel matrices whose first columns are $C_x = (x_0 ... x_{m-1})^T$ and $C_y = (y_0 ...y_{m-1})^T$  respectively, and whose last rows are $R_x = (x_{m-1},...,x_{n-1})$ and $R_y=( y_{m-1},...,y_{n-1}$ respectively.
\end{lemma}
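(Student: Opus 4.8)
The plan is to assemble the identities already worked out in the paragraphs preceding the statement. By proposition~\ref{bernpasc} we have $e_m^T P_m G_m(s) P_m^{-1} = e_m^T B_m^e(s)$, so the computation carried out above shows that
$$b_1(s) = e_m^T B_m^e(s)\, v_x(s), \qquad v_x(s) := \sum_{j=0}^{m-1}\binom{m-1}{j}(1-s)^{m-1-j}s^j\, x_{j\ldots j+m-1},$$
and, reading off the $i$-th entry of $v_x(s)$ and using the explicit last row $[B_m^e(s)]_{m,k}=\binom{m-1}{k-1}s^{k-1}(1-s)^{m-k}$ together with a shift of the summation index, that this $i$-th entry equals $e_m^T B_m^e(s)\, x_{i-1\ldots m+i-2}$. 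The same two identities hold verbatim with $y$ in place of $x$.

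Next I would recognize the windows $x_{i-1\ldots m+i-2}$ as the columns of $H_x$. Since $H_x$ is the $m\times m$ Hankel matrix with first column $(x_0\ \ldots\ x_{m-1})^T$ and last row $(x_{m-1}\ \ldots\ x_{2m-2})$, its entries are $(H_x)_{ki}=x_{k+i-2}$ for $k,i\in\{1,\ldots,m\}$, so $H_x e_i = x_{i-1\ldots m+i-2}$. Hence the $i$-th entry of $v_x(s)$ is $e_m^T B_m^e(s) H_x e_i$, i.e. $v_x(s) = \bigl(e_m^T B_m^e(s) H_x\bigr)^T = H_x^T B_m^e(s)^T e_m$; and since a square Hankel matrix is symmetric, $H_x^T = H_x$, giving $v_x(s) = H_x B_m^e(s)^T e_m$. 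Substituting this into $b_1(s) = e_m^T B_m^e(s)\, v_x(s)$ yields $b_1(s) = e_m^T B_m^e(s) H_x B_m^e(s)^T e_m$, and the identity for $b_2(s)$ follows by the identical argument with $H_y$.

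I do not expect a genuine obstacle here: the substantive work — the induction producing $B(s)=\sum_{j=0}^{m-1}\binom{m-1}{j}(1-s)^{m-1-j}s^j B_{Q_j\ldots Q_{j+m-1}}(s)$ and the pull-out of the constant matrix $P_m G_m(s) P_m^{-1}$ — has been done above. The only point needing care is index bookkeeping: one must check that the coefficient vector produced by the $k=m-1$ step of the induction, whose entries are sliding-window partial sums of the $x_i$, is indexed in exactly the way that makes it $H_x$ applied to $B_m^e(s)^T e_m$; this reduces to matching the window $x_{i-1\ldots m+i-2}$ with the $i$-th column of the Hankel matrix $H_x$ and then invoking the symmetry $H_x^T=H_x$.
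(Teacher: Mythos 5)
Your proposal is correct and follows essentially the same route as the paper, which derives the lemma from the de Casteljau recursion $B(s)=\sum_{j=0}^{m-1}\binom{m-1}{j}(1-s)^{m-1-j}s^j B_{Q_j\ldots Q_{j+m-1}}(s)$ and then identifies the sliding windows $x_{i-1\ldots m+i-2}$ with the columns of the symmetric Hankel matrix $H_x$. You merely make explicit the index bookkeeping that the paper leaves implicit in the paragraph preceding the statement.
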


\

\begin{corollary}
\label{impo}
Let $n=2m-1$, where $m$ is an integer greater than 1.
Let $B$ be a B\'ezier curve of degree $n-1$ defined from $n$ control points, and let
$x=(x_0 ... x_{n-1})^T$ and $y=(y_0 ... y_{n-1})^T$ be their respective vector of coordinates.
Let $H_x=hankel(C_x,R_x)$ and $H_y=hankel(C_y,R_y)$, where $C_x = (x_0 ... x_{m-1})^T$, $R_x = (x_{m-1},...,x_{n-1})$, $C_y = (y_0 ... y_{m-1})^T$ and $R_y = (y_{m-1},...,y_{n-1})$. If $H_x$ and $H_y$ are nonsingular,
then there exist complex numbers $d_1, ...,  d_n$, $t_1, ..., t_n$, $\hat d_1, ..., \hat d_n$ and
$\hat t_1, ..., \hat t_n$ such that
\begin{equation}
b_1(s) = \sum_{i=1}^m d_i (1-s+s. t_i)^{n-1} \mbox{ and } b_2(s) = \sum_{i=1}^m \hat d_i (1-s+s. \hat t_i)^{n-1}.
\label{theeq}
\end{equation}
\end{corollary}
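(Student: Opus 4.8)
The plan is to feed the Hankel-form representation from Lemma~\ref{lemab} into the Vandermonde factorization of Theorem~\ref{thethe} and then to collapse the resulting quadratic form with the binomial theorem. First I would apply Theorem~\ref{thethe} to $H_x$, with the role of $n$ there played by $m$: this is legitimate because the control-point data $x_0,\dots,x_{n-1}=x_0,\dots,x_{2m-2}$ provides exactly the scalars $h_1,\dots,h_{2m-1}$ that the theorem needs, and $H_x$ is nonsingular by hypothesis. Choosing any $\gamma$ outside the finite set $T$ associated to $H_x$, I obtain $H_x=VDV^{T}$ with $V=\mathrm{vander}(\alpha_1,\dots,\alpha_m)$ (so $Ve_i=(1\ \alpha_i\ \cdots\ \alpha_i^{m-1})^{T}$) and $D=\mathrm{diag}(d_1,\dots,d_m)$, the $\alpha_i$ being the simple eigenvalues of the corresponding companion matrix. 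The same is done for $H_y$, independently, yielding scalars $\hat d_i$ and $\hat\alpha_i$.

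Next, by Lemma~\ref{lemab},
\[
b_1(s)=e_m^{T}B_m^{e}(s)H_x\,(B_m^{e}(s))^{T}e_m=e_m^{T}B_m^{e}(s)\,V D V^{T}(B_m^{e}(s))^{T}e_m .
\]
Setting $u=u(s):=V^{T}(B_m^{e}(s))^{T}e_m\in\C^{m}$, the right-hand side is the quadratic form $u^{T}Du=\sum_{i=1}^{m}d_i\,u_i^{2}$, so the whole problem reduces to evaluating the scalar $u_i=e_m^{T}B_m^{e}(s)\,Ve_i=e_m^{T}B_m^{e}(s)\,(1\ \alpha_i\ \cdots\ \alpha_i^{m-1})^{T}$.

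The key computation — and really the only content of the argument — is that this last quantity equals $(1-s+s\alpha_i)^{m-1}$. Indeed, the last row of $B_m^{e}(s)$ has $j$-th entry $\binom{m-1}{j-1}s^{j-1}(1-s)^{m-j}$, so
\[
u_i=\sum_{j=1}^{m}\binom{m-1}{j-1}(s\alpha_i)^{j-1}(1-s)^{m-j}=(1-s+s\alpha_i)^{m-1}
\]
by the binomial theorem (one could equally use $B_m^{e}(s)=P_mG_m(s)P_m^{-1}$ from Proposition~\ref{bernpasc}). Substituting back and using $n-1=2m-2=2(m-1)$ gives $b_1(s)=\sum_{i=1}^{m}d_i(1-s+s\alpha_i)^{2(m-1)}=\sum_{i=1}^{m}d_i(1-s+s\,t_i)^{n-1}$ with $t_i:=\alpha_i$, and the identical computation for $b_2$ with the factorization of $H_y$ produces the $\hat d_i$ and $\hat t_i$. (If one insists on index sets of size $n$ as written, simply pad with $d_i=t_i=0$ for $i>m$.)

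I do not anticipate a genuine obstacle: the proof is short once Theorem~\ref{thethe} is in hand. The two points that need care are the bookkeeping in transplanting Theorem~\ref{thethe} from dimension $n$ to dimension $m$ (checking that the data it requires is indeed available from the control points), and the index shift in the binomial identity above. There is no simultaneity issue between the exceptional sets for $H_x$ and $H_y$, since the two Vandermonde factorizations are chosen independently and each exceptional set is finite.
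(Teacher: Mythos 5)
Your proposal is correct and follows essentially the same route as the paper's own proof: apply Theorem~\ref{thethe} to the $m\times m$ matrices $H_x$ and $H_y$, substitute the factorization $VDV^T$ into the Hankel form of Lemma~\ref{lemab}, and evaluate $e_m^T B_m^e(s)Ve_i=(1-s+s\,t_i)^{m-1}$ by the binomial theorem. Your added remarks on the dimension bookkeeping and the padding of the index sets to size $n$ are sensible clarifications of points the paper leaves implicit.
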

\begin{proof}
If $H_x$ is nonsingular, from theorem \ref{thethe}, there is a Vandermonde matrix $V = vander([t_1,...,t_n])$ and a diagonal matrix $D=diag([d_1,...,d_n])$ such that $H_x = VDV^T$. So,
$$b_1(s) = e_m^T B_m^e(s) H_x (B_m^e(s))^Te_m = e_m^T B_m^e(s) VDV^T (B_m^e(s))^Te_m=$$
$$ =\sum_{i=1}^m d_i (1-s+s.t_i)^{2m-2} =  \sum_{i=1}^m d_i (1-s+s.t_i)^{n-1},$$
for $e_m^T B_m^e(s)Ve_i = \sum_{j=0}^{m-1} (1-s)^{m-1-j}.s^j.t_i^j= (1-s+s.t_i)^{m-1}$
for all $i\in \{1,...,m\}$. In an analogous way, we conclude that
$$ b_2(s) = \sum_{i=1}^m \hat d_i (1-s+s.\hat t_i)^{n-1},$$ for some $\hat d_1, ..., \hat d_n$ and
$\hat t_1, ..., \hat t_n$.
\qquad\end{proof}

The following proposition is about another representation of a B\'ezier curve of degree $n-1$.

\

\begin{proposition}\label{double}
Let $n=2m-1$, where $m$ is an integer greater than 1 and
let $B(s) = \left( b_1(s)\, b_2(s) \right)^T$ be a B\'ezier curve of degree $n-1$
defined from  $n$ points
$Q_0 = (x_0,y_0)$, $Q_1=(x_1,y_1)$, ..., $Q_{n-1}=(x_{n-1},y_{n-1})$
of $\R^2$. Then
$$b_1(s) = \sum_{k=0}^{n-1} a_k \binom{n-1}{k} s^k \mbox { and }
b_2(s) = \sum_{k=0}^{n-1} b_k \binom{n-1}{k} s^k, \mbox{ where }$$
$\left(\begin{array}{ccc} a_0 & ... & a_{n-1}\\
\vdots & \ddots  & \vdots \\
a_{m-1} &  ... & a_{n-1} \end{array} \right)=
P_m^{-1}H_xP_m^{-T}$ and $\left(\begin{array}{ccc} b_0 & ... & b_{n-1}\\
\vdots & \ddots  & \vdots \\
b_{m-1} &  ... & b_{n-1} \end{array} \right)=P_m^{-1}H_yP_m^{-T}$.
\end{proposition}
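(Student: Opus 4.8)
The plan is to combine the Hankel form of Lemma~\ref{lemab} with the Pascal factorization of the Bernstein matrix from Proposition~\ref{bernpasc}, and then to identify the coefficients by means of the Vandermonde convolution identity.

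First I would start from $b_1(s) = e_m^T B_m^e(s) H_x (B_m^e(s))^T e_m$ and substitute $B_m^e(s) = P_m G_m(s) P_m^{-1}$, which gives
$$b_1(s) = e_m^T P_m G_m(s) \left( P_m^{-1} H_x P_m^{-T}\right) G_m(s) P_m^T e_m,$$
where $G_m(s) = diag([1,s,\ldots,s^{m-1}])$. Set $\widehat H_x = P_m^{-1} H_x P_m^{-T} = P_m[-1] H_x P_m[-1]^T$, using Lemma~\ref{facts}(c). By Corollary~\ref{diag} this is again an $m\times m$ Hankel matrix, so its $(i,j)$ entry depends only on $i+j$ and may be written $(\widehat H_x)_{ij} = a_{i+j-2}$ with $a_0,\ldots,a_{n-1}$ real; this is exactly the array displayed in the statement, and the coefficients for $b_2$ are obtained in the same way from $\widehat H_y = P_m^{-1} H_y P_m^{-T}$. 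Invoking Corollary~\ref{diag} here is what makes the definition of the single indexed family $a_0,\ldots,a_{n-1}$ legitimate.

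Next I would evaluate the two outer factors. Since the last row of $P_m$ is $\left(\binom{m-1}{0}\ \binom{m-1}{1}\ \cdots\ \binom{m-1}{m-1}\right)$, one gets $e_m^T P_m G_m(s) = \left(\binom{m-1}{j-1} s^{j-1}\right)_{j=1}^m$, and $G_m(s) P_m^T e_m$ is its transpose. Expanding the bilinear form then yields
$$b_1(s) = \sum_{i=1}^m \sum_{j=1}^m \binom{m-1}{i-1}\binom{m-1}{j-1}\, a_{i+j-2}\, s^{i+j-2},$$
and regrouping according to $k = i+j-2$, which runs over $0,1,\ldots,2m-2 = n-1$, gives
$$b_1(s) = \sum_{k=0}^{n-1} a_k s^k \sum_{i+j-2=k} \binom{m-1}{i-1}\binom{m-1}{j-1}.$$

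The final step is to evaluate the inner sum. Writing $i' = i-1$, $j' = j-1$, it equals $\sum_{i'+j'=k}\binom{m-1}{i'}\binom{m-1}{j'}$, and by the Vandermonde convolution identity (the same identity already used in the proof of Lemma~\ref{hankpascal}) this is $\binom{2m-2}{k} = \binom{n-1}{k}$. Substituting back gives $b_1(s) = \sum_{k=0}^{n-1} a_k \binom{n-1}{k} s^k$, and the argument for $b_2(s)$ is verbatim the same with $H_y$ in place of $H_x$. I do not expect a genuine obstacle: the only points needing care are the appeal to Corollary~\ref{diag} so that $\widehat H_x$ genuinely has Hankel structure, and keeping the index bookkeeping in the double sum straight so that the Vandermonde convolution applies cleanly.
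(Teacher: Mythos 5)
Your proposal is correct and follows essentially the same route as the paper: apply Lemma~\ref{lemab}, factor the Bernstein matrix as $P_mG_m(s)P_m^{-1}$, read off the last row of $P_mG_m(s)$, and collapse the double sum with the Vandermonde convolution. The only difference is that you explicitly invoke Corollary~\ref{diag} (via $P_m^{-1}=P_m[-1]$) to justify the Hankel structure of $P_m^{-1}H_xP_m^{-T}$, a point the paper leaves implicit in the statement; this is a welcome clarification, not a divergence.
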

\begin{proof}
Let $A=P_m^{-1}H_xP_m^{-T}$ and $B=P_m^{-1}H_yP_m^{-T}$. From lemma~\ref{lemab}, it follows that
$$b_1(s) = e_m^T P_mG(s) A G(s) P_m^T e_m \mbox{ and }
b_2(s) = e_m^T P_mG(s)B G(s)P_m^T e_m.$$
Now, $e_m^TP_mG(s) = \left( \binom{m-1}{0}\, \binom{m-1}{1}s\, ...\, \binom{m-1}{m-1}s^{m-1}\right)$.
Therefore,
$$b_1(s) =  \sum_{k=0}^{n-1} a_k \left( \sum_{j=0}^k \binom{m-1}{j} \binom{m-1}{k-j} \right) s^k,$$
$$b_2(s) =  \sum_{k=0}^{n-1} b_k \left( \sum_{j=0}^k \binom{m-1}{j} \binom{m-1}{k-j} \right) s^k,$$
and the conclusion now follows from Vandermonde convolution
$$\sum_{j=0}^k \binom{m-1}{j} \binom{m-1}{k-j} = \binom{2(m-1)}{k}= \binom{n-1}{k}.$$
\qquad\end{proof}

\

We have just proved a property of the Pascal matrix-vector multiplication, which is remarked in the following corollary:

\begin{corollary}\label{gol}
Let $n=2m-1$, where $m\ge 1$, let $x=(x_0 ... x_{n-1})^T$
be a vector of $\C^n$ and let $H_x$ the Hankel matrix defined by $\left( H_x\right)_{ij}=x_{i+j-2}$. Then, $a=P_nx$, where $a=(a_0 ... a_{n-1})^T$ is such that $\left( P_m H_x P_m^T\right)_{ij}=a_{i+j-2}$.
\end{corollary}
\begin{proof} Let $y=G_n(-1)x$. Let $a = P_nx = G_n(-1)P_n^{-1}G_n(-1)x$. Then,
$$e_n^TP_nG_n(s)P_n^{-1}y=e_n^TP_nG_n(s)G_n(-1)a
= \sum_{k=0}^{n-1} (-1)^k a_k \binom{n-1}{k} s^k.$$
On the other hand, from proposition~\ref{double},
$(-1)^{i+j-2}a_{i+j-2}=\left( P_m^{-1} H_y P_m^{-T}\right)_{ij}$. Therefore,
$a_{i+j-2}=\left( P_m  H_x P_m^T\right)_{ij}$.
\qquad\end{proof}

\


\section{Numerical experiments}

We are going to compare B\'ezier curves of degree $(n-1)$ computed from the classical Casteljau's algorithm as well as from two other descriptions of the curve: as a Hankel form and by using the spectral decomposition $B^e_n (s) = P_nG_n(s)P_n^{-1}$.
We first observe that an uniform scaling of the control points of a B\'ezier curve yields an uniform scaling of the curve and if those points are translated by a vector $v=(p,q)$, then
the B\'ezier curve is also translated by $v$.
Hence, without loss of generality, we are going to assume that the coordinates of the control points are all real positive and also less than or equal to 1. So, we are going to use the MATLAB function $rand$ to generate $n$ test control points: $A=rand(n,2)$.

The Casteljau's algorithm is a very accurate algorithm to evaluate B\'ezier curves, for it is based on a numerically stable Bernstein matrix-vector multiplication:

\begin{algorithm}
\caption{Casteljau's algorithm}
\begin{algorithmic}
\STATE $n= length(x)$;
\STATE $x = x(:)$;
\STATE $ss = 1-s$;
\FOR{$k = 2:n$}
    \FOR{$t = n:-1:k$}
       \STATE x(t) = ss*x(t-1) + s*x(t);
    \ENDFOR
\ENDFOR
\STATE $b(s)=x(n)$
\end{algorithmic}
\end{algorithm}

This multiplication can be seen as a sequence of bi-diagonal matrix-vector multiplications, which becomes well explicit from the following lemma \cite{licio-sacht}:

\begin{lemma} Let $B_n^e(t)$ be a $n\times n$ Bernstein matrix.
Then
$$B_n^e(s)=E_{n-1}^e(s)...E_1^e(s) \mbox{ where, for $1\le k\le n-1$},$$
$E_k^e(s)=e_1e_1^T + ... + e_ke_k^T + e_{k+1}[(1-s)e_k+se_{k+1}]^T+...+e_n[(1-s)e_{n-1}+se_n]^T.$
\label{pasb}
\end{lemma}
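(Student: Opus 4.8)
The plan is to prove the factorization $B_n^e(s) = E_{n-1}^e(s)\cdots E_1^e(s)$ by recognizing it as the matrix form of the inner loop of Casteljau's algorithm. First I would check that each $E_k^e(s)$ is the identity except on rows $k+1,\dots,n$, where row $i$ replaces $x_i$ by $(1-s)x_{i-1} + s x_i$; this is exactly one pass of the inner \texttt{for} loop (for fixed $k$) run from the bottom up, and the bottom-up order is what makes these elementary updates compose as a single left-multiplication. So the product $E_{n-1}^e(s)\cdots E_1^e(s)$ applied to $x$ computes the final value $x(n)$ after all $n-1$ sweeps, which by definition of Casteljau's algorithm equals $e_n^T B_n^e(s) x$; but to get the matrix identity I need all coordinates, not just the last, so a direct induction is cleaner than appealing to the algorithm.

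The cleanest route is induction on $n$, or better, a direct computation of the $(i,j)$ entry of the product. I would set $F_k = E_k^e(s)$ and show by induction on $k$ that $E_k^e(s)\cdots E_1^e(s)$ has $(i,j)$ entry equal to $\binom{i-1}{j-1} s^{j-1}(1-s)^{i-j}$ for $i \le k+1$ (the "completed" rows) and equal to $\delta_{ij}$ adjusted appropriately for the rows not yet touched. Actually the slickest phrasing: the partial product $E_k^e(s)\cdots E_1^e(s)$ equals $B_{k+1}^e(s)$ in its leading $(k+1)\times(k+1)$ block, bordered below by rows that have been shifted by one multiplication of the bi-diagonal factor. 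Carrying the induction step amounts to verifying that left-multiplying by $E_{k+1}^e(s)$ turns row $i$ (for $i \ge k+2$) from $\binom{i-2}{j-1}s^{j-1}(1-s)^{i-1-j}$-type entries into $\binom{i-1}{j-1}s^{j-1}(1-s)^{i-j}$-type entries, which is just Pascal's rule $\binom{i-1}{j-1} = \binom{i-2}{j-1} + \binom{i-2}{j-2}$ combined with the weights $(1-s)$ and $s$.

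The main obstacle is purely bookkeeping: keeping the indices of the partially-formed Bernstein entries straight through the induction, since each factor $E_k^e(s)$ simultaneously "finalizes" one more row and "advances" all the rows below it by one bi-diagonal step. I expect no genuine difficulty beyond this; the key algebraic fact is the single application of Pascal's rule per induction step, exactly as in the proof of Proposition~\ref{bernpasc}. An alternative that sidesteps the bookkeeping is to note that each $E_k^e(s) = I + s N_k$ where $N_k$ is the shift-type matrix $\sum_{i>k} e_i(e_{i-1}-e_i)^T$... wait, that is not quite nilpotent-friendly; I would instead just trust the row-by-row induction. Either way, the proof is short once the induction hypothesis is stated correctly.
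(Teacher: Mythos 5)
The paper offers no proof of this lemma at all: it is quoted from reference \cite{licio-sacht} and used only to explain why Casteljau's algorithm is a sequence of bidiagonal matrix--vector products. So there is nothing in the paper to match your argument against; judged on its own, your plan is correct and is the natural proof. One caution on the bookkeeping you flag: your stated induction hypothesis for the untouched rows is not quite right. For $i>k+2$, row $i$ of the partial product $E_k^e(s)\cdots E_1^e(s)$ is \emph{not} of the form $\binom{i-2}{j-1}s^{j-1}(1-s)^{i-1-j}$; those rows have already been hit by all $k$ factors and carry degree-$k$ weights along a band, namely
\begin{equation*}
\bigl(E_k^e(s)\cdots E_1^e(s)\bigr)_{ij}=\binom{k}{\,j-i+k\,}\,s^{\,j-i+k\,}(1-s)^{\,i-j}\quad\text{for } i\ge k+1,\ i-k\le j\le i,
\end{equation*}
while rows $i\le k+1$ already agree with $B_n^e(s)$. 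With this hypothesis the step is exactly as you describe: left-multiplication by $E_{k+1}^e(s)$ leaves rows $1,\dots,k+1$ alone and replaces row $i\ge k+2$ by $(1-s)\cdot(\text{row }i-1)+s\cdot(\text{row }i)$, and Pascal's rule $\binom{k+1}{\ell}=\binom{k}{\ell}+\binom{k}{\ell-1}$ advances every banded row from degree $k$ to degree $k+1$; at $k=n-1$ all rows are Bernstein rows. Your alternative appeal to the algorithm (acting on vectors) would indeed only recover the last row $e_n^TB_n^e(s)$, so the full matrix identity does need the row-by-row induction as you concluded.
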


\

Another way of calculating a B\'ezier curve is from its description as a Hankel form, which allows us to utilize a Vandermonde factorization of the associated Hankel matrix, and its algorithm is as follows:
\begin{algorithm}\nonumber
\caption{B\'ezier curve as a Hankel form}
\begin{itemize}
  \item given $n=2m-1$ distinct points of $\R^2$, let $H^x$ and $H^y$ be two $m\times m$ Hankel matrices formed from their coordinates;
  \item choose a number $\gamma$ at random and define the vectors $x_{\gamma}=(h^x_{m+1} \, ... \, h^x_n \, \gamma)^T$ and $y_{\gamma}=(h^y_{m+1} \, ... \, h^y_n \, \gamma)^T$;
  \item solve the systems $H^x z_{\gamma} = x_{\gamma}$ and $H^y w_{\gamma} = y_{\gamma}$ and consider the companion matrices $C_{z_{\gamma}}$ and $C_{w_{\gamma}}$;
  \item find the spectra of $C_{z_{\gamma}}$ and $C_{w_{\gamma}}$;
  \item define $d_x= (V^x_{\gamma})^{-1}H^x e_1$ and $d_y= (V^y_{\gamma})^{-1}H^y e_1$, where $V^x_{\gamma}$ and $V^y_{\gamma}$ are Vandermonde matrices formed from the spectrum of $C_{z_{\gamma}}$ and from the spectrum $C_{w_{\gamma}}$, respectively;
  \item for each $s\in [0,1]$, the B\'ezier curve $B(s)$ is then defined from the equation (\ref{theeq}).
  \end{itemize}
\end{algorithm}
We are supposing here that $H^x$ and $H^y$ are both nonsingular and that $\gamma$ is not one of those numbers which
yield in non-diagonalizable companion matrices.

\

The third way of computing a B\'ezier curve will be carried out by a
Pascal matrix method, which computes a B\'ezier curve $B(s)$ of degree $n-1$ via
the decomposition $B^e_n(s) = P_n G_n(-s) P_nG_n(-1)$:
\begin{algorithm}
\caption{Pascal matrix algorithm}
\begin{itemize}
\item given $n$, take $t\ge 1$ such that $P_n(t)$ is similar to
a lower triangular Toeplitz matrix $T=T(t)$ with maximum $\min T_{ij}/\max T{ij}$;
\item multiply $z=P_nG_n(-1)x = P_nx_-$  and $w=P_nG_n(-1)y = P_ny_-$ via fast Toeplitz matrix-vector
multiplication;
\item from a Horner-like scheme, evaluate the polynomials $e_n^T P_n G_n(-s)z$ and $e_n^T P_n G_n(-s)w$.
\end{itemize}
\end{algorithm}

We have used a fast Pascal matrix-vector
multiplication done from the similar Toeplitz matrix $T(t)$ (see \cite{wang}), 
where $t$ has been found by a procedure described in \cite{licio-sacht}, 
plus the $B(s)$ evaluation given by a Horner-like
scheme that evaluates the polynomial concomitantly with the binomial coefficients.
Since the $B(s)$-evaluation becomes unstable when $s$ approaches to 1,
we have introduced a simple procedure to improve the evaluation, that is
to divide the process of evaluation in two independent steps:
\begin{enumerate}
\item[(a)] evaluate $e_n^T P_n G_n(-s)z$ and $e_n^T P_n G_n(-s)w$ for $0\le s\le 1/2$;
\item[(b)] evaluate $e_n^T P_n G_n(-s)z_r$ and $e_n^T P_n G_n(-s)w_r$ for $1/2> s\ge 0$, which
is equivalent to evaluate $e_n^T P_n G_n(-s)z$ and $e_n^T P_n G_n(-s)w$ for $1/2 < s\le 1$.
\end{enumerate}

\begin{table}[ht]
\caption{Mean run time of computation of 129 points of a B\'ezier curve of degree N-1 by three
different methods: Casteljau's (C), Hankel form (H) and direct Pascal matrix method (P). The results of the second and third methods are compared to the ones obtained by Casteljau's via norm of the difference of the computed points
by the respective method and by Casteljau's.}

\begin{center} \footnotesize
\begin{tabular}{|c|c|c|c|c|c|} \hline
N & Time (Casteljau) &Time (Hankel) & Time (Pascal) & $||B_C - B_H||$ & $||B_C-B_P||$ \\
\hline
15 & 0.005s & 0.007s  &0.004s   & 1.3399e-13  & 2.6782e-12   \\
\hline
23 & 0.009s & 0.009s  &0.004s   & 1.0540e-11  &  2.2427e-09   \\
\hline
31 & 0.015s & 0.011s  &  0.005s &  2.3082e-09 &  1.4962e-06  \\
\hline
39 & 0.022s & 0.016s  &  0.006s &   9.7593e-11 &   5.6283e-04  \\
\hline
47 & 0.030s & 0.019s  &  0.006s & 6.6642e-05  &  0.1035 \\
\hline
55 & 0.040s & 0.023s  &  0.007s   & 4.9873e-08  &  457.2366  \\
\hline
63 & 0.053s & 0.027s  & 0.007s  &  1.8852e-05  & 2.2703e+04 \\
\hline
71 & 0.066s & 0.029s  &0.008s   & 6.0574e-07  &  1.7485e+07  \\
\hline
79 & 0.082s & 0.036s  & 0.009s  & 1.0117e-06  & 5.6499e+09 \\
\hline
\end{tabular}
\end{center}
\label{three}
\end{table}

\subsection{Conditioning a Hankel matrix}

It is not rare $n=2m-1$ numbers taken in the interval [0,1] at random result in an ill-conditioned $m\times m$ Hankel matrix $H$. A simple way of handling this is to shift its skew diagonal in order to turn it into a skew-diagonal dominant matrix, $\tilde H=H+\sigma C$, where $C$ is the reciprocal matrix. Let $B_H$ and $B_{\tilde H}$ be the B\'ezier curves corresponding to $H$ and $\tilde H$, respectively. Then, for each $s\in [0,1]$, we compute $B_H(s)$ by subtracting $\sigma$ times $e_m^T B_m^e(s) C_m (B_m^e(s))^Te_m$ from $B_{\tilde H}$. Moreover, this quadratic form has a simple formulation as can be seen in the next lemma.

\begin{lemma} Let $C_m=hankel(e_m,e_1^T)$, which is called the reciprocal matrix. Then, if $w=e^{2\pi i/m}$,
$$e_m^T B_m^e(s) C_m (B_m^e(s))^Te_m=\frac{1}{m} \sum_{j=1}^{m} w^{j-1} (1-s+s.w^{j-1})^{n-1}.$$
\end{lemma}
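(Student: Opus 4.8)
The plan is to recognize that $C_m$ is the $m\times m$ reversal matrix and that the lemma simply records one explicit Vandermonde factorization of it. Since $C_m = hankel(e_m, e_1^T)$ has first column $e_m$ and last row $e_1^T$, the constant-anti-diagonal structure forces $(C_m)_{ij} = 1$ when $i+j = m+1$ and $(C_m)_{ij}=0$ otherwise; thus $C_m$ is the exchange matrix, which is a nonsingular Hankel matrix. By Theorem~\ref{thethe} it has a Vandermonde factorization, and the right-hand side of the lemma is exactly the Bernstein--Hankel expansion~(\ref{theeq}) produced by Corollary~\ref{impo} from one such factorization. So I would (i) exhibit an explicit factorization $C_m = V_m D_m V_m^T$ and (ii) run it through the computation already used in the proof of Corollary~\ref{impo}.

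For (i): with $w = e^{2\pi i/m}$ take $V_m = vander(1, w, \ldots, w^{m-1})$ (a genuine Vandermonde matrix, the $m$-th roots of unity being distinct) and $D_m = \frac1m\, diag(1, w, \ldots, w^{m-1})$. Then $(V_m D_m V_m^T)_{ij} = \frac1m \sum_{k=1}^m (w^{k-1})^{i-1}\, w^{k-1}\, (w^{k-1})^{j-1} = \frac1m \sum_{k=0}^{m-1} (w^{i+j-1})^k$, and for $i,j\in\{1,\ldots,m\}$ the base $w^{i+j-1}$ equals $1$ only when $m\mid i+j-1$, i.e.\ only for $i+j=m+1$ (since $1\le i+j-1\le 2m-1$), where the sum is $m$. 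Hence $V_m D_m V_m^T = C_m$.

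For (ii): as in the proof of Corollary~\ref{impo}, for any Vandermonde matrix $V = vander(t_1,\ldots,t_m)$ one has $e_m^T B_m^e(s) V e_i = \sum_{j=0}^{m-1}\binom{m-1}{j}(s\,t_i)^j(1-s)^{m-1-j} = (1-s+s\,t_i)^{m-1}$ by the binomial theorem, so that $e_m^T B_m^e(s)\, V D V^T (B_m^e(s))^T e_m = \sum_{i=1}^m d_i (1-s+s\,t_i)^{2(m-1)}$ for $D = diag(d_1,\ldots,d_m)$. Substituting $t_i = w^{i-1}$, $d_i = w^{i-1}/m$, and using $2(m-1) = n-1$ yields the asserted identity.

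The only place that needs care is the roots-of-unity bookkeeping in (i); everything else is a quotation of earlier steps, so I do not expect a real obstacle. As a check, both sides admit the closed form $\binom{n-1}{m-1}\bigl(s(1-s)\bigr)^{m-1}$: the left side because $C_m$ reverses the vector $(B_m^e(s))^T e_m = \bigl(\binom{m-1}{j-1}s^{j-1}(1-s)^{m-j}\bigr)_{j}$ and $\sum_{k=0}^{m-1}\binom{m-1}{k}^2 = \binom{2m-2}{m-1}$, and the right side because expanding by the binomial theorem and summing over the $m$-th roots of unity annihilates every term except $\ell = m-1$. That computation could in fact serve as a self-contained proof in place of (i)--(ii).
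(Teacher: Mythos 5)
Your proof is correct and follows essentially the same route as the paper: the paper's own proof simply asserts $C_m = VDV^T$ with $V=vander(1,w,\ldots,w^{m-1})$ and $D=diag(1/m,w/m,\ldots,w^{m-1}/m)$ and then invokes the computation from the proof of Corollary~\ref{impo}, which is exactly your steps (i) and (ii). You additionally supply the roots-of-unity verification of the factorization (which the paper leaves as ``easy to see'') and a useful closed-form cross-check, but the argument is the same.
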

\begin{proof}
It is easy to see that $C_m = VDV^T$, where $V=vander(1,w,...,w^{m-1})$ and $D = diag(1/m,w/m,...,w^{m-1}/m)$.
From the proof of Corollary \ref{impo},
$$e_m^T B_m^e(s)VDV^T (B_m^e(s))^Te_m =\frac{1}{m} \sum_{j=1}^m w^{j-1} (1-s+s.w^{j-1})^{n-1}.$$
\qquad\end{proof}

\

In table \ref{twoplus}, we can see that this simple technique of preconditioning have improved the computation of B\'ezier curves when their control points yield ill-conditioned Hankel matrices (cond(H) is the maximum condition number of the two Hankel matrices formed by the coordinates of the control points). For each Hankel matrix $H$, $\sigma$ was taken as the sum of the absolute values of its entries. Since our Vandermonde factorization of a Hankel matrix depends on a value chosen at random, the error between the curve computed by Casteljau's and the one computed from that factorization varied enormously when the Hankel matrices associated with the coordinates were ill-conditioned. In table \ref{twoplus}, for each $n$, we can see the maximum error among several experiments done. However, sometimes it happened to have a big error followed by a tiny one.
Notice that all our experiments have been run in a 32-bits AMD Athlon XP 1700+ (1467 MHz).

\begin{table}[ht]
\caption{Mean run time of computation of 129 points of a B\'ezier curve of degree N-1 by three
different methods: Casteljau's (C), Hankel form (H) and preconditioning Hankel form (PH). The results of the second and third methods are compared to the ones obtained by Casteljau's via norm of the difference of the computed points
by the respective method and by Casteljau's.}

\begin{center} \footnotesize
\begin{tabular}{|c|c|c|c|c|c|c|} \hline
N & cond(H) & Time (C) &Time (H) & Time (PH) & $||B_C - B_H||_2$ & $||B_C-B_{PH}||_2$ \\
\hline
31 & 1.5379e+03& 0.015s & 0.012s  &  0.019s &  3.3983e+11 &  2.9510e-11  \\
\hline
39 &760.4605 & 0.022s & 0.016s  &  0.024s &   2.6541e+09 &   1.1134e-10  \\
\hline
47 &2.9956e+03 & 0.030s & 0.019s  &  0.031s &  1.3731e+13 &   1.0189e-10\\
\hline
55 &577.1450 & 0.041s & 0.023s  &0.036s   & 4.3750e+03 & 1.7107e-08   \\
\hline
63 &4.2415e+03 & 0.053s & 0.027s  & 0.042s  &  9.8000e+70  &  2.5894e-08 \\
\hline
71 &907.6247 & 0.066s & 0.029s  &0.049s   & 3.7813e+06  & 3.2318e-07   \\
\hline
79 &1.1167e+03 & 0.081s & 0.033s  & 0.057s  & 4.1314e+04  & 2.1604e-05 \\
\hline
\end{tabular}
\end{center}
\label{twoplus}
\end{table}



\begin{thebibliography}{99}

\bibitem{aceto} {\sc L. Aceto and D. Trigiante}, {\em The matrices of Pascal and other greats}, Amer.
Math. Monthly, 108 (2001), pp. 232--245.

\bibitem{licio-sacht}
{\sc L. H. Bezerra and L. K. Sacht}, {\em On computing B\'ezier curves by Pascal matrix methods}, arXiv:1006.4327v1  [math.NA].

\bibitem{bezier} {\sc P. B\'ezier}, {\em Numerical Control: Mathematics and
Applications}, John Wiley \& Sons, London, 1972.


\bibitem{boley2} {\sc D.~L. Boley, F.~T. Luk and D. Vandervoorde}, {\em A General Vandermonde
Factorization of a Hankel Matrix}, in ILAS Symp. on Fast Algorithms
for Control, Signals and Image Processing, 1997, Winnipeg.

\bibitem{casteljau} {\sc W. Boehm and A. M\"uller}, {\em On de Casteljau's algorithm},
Comput. Aided Geom. D., 16 (1999), pp. 587--605.

\bibitem{cav} {\sc G.~S. Call and D.~J. Velleman}, {\em Pascal's Matrices},
Amer. Math. Monthly, 100 (1993), pp. 372--376.

\bibitem{fiedler} {\sc M. Fiedler}, {\em Special Matrices and Their Applications in Numerical Mathematics},
Second ed., Dover, Mineola, NY, 2008.

\bibitem{knuth} {\sc R. L. Grahan, D. E. Knuth and O. Patashnik}, {\em Concrete Mathematics - a Foundation for
Computer Science}, Second ed., Addison-Wesley, Reading, MA, 1994.


\bibitem{heinig_rost} {\sc G. Heinig and K. Rost}, {\em Algebraic methods for
Toeplitz~-~like matrices and operators}, Birkh\"auser, Basel, 1984.

\bibitem{phien} {\sc H.~N. Phien and N. Dejdumrong}, {\em Efficient
algorithms for B\'ezier curves}, Comput. Aided Geom. D., 17 (2000), pp. 247--250.

\bibitem{wang} {\sc X. Wang and J. Zhou}, {\em A fast eigenvalue algorithm
for Pascal Matrices}, Appl. Math. Comput., 183 (2006), pp. 713--716.


\end{thebibliography}
\end{document}